\newcommand{\R}{\mathbb{R}}
\newcommand{\C}{\mathbb{C}}
\newcommand{\N}{\mathbb{N}}
\newcommand{\T}{\mathbb{T}}
\newcommand{\Z}{\mathbb{Z}}
\newcommand{\Aut}{\operatorname{Aut}}
\newcommand{\TT}{\mathcal{T}}
\newcommand{\OO}{\mathcal{O}}
\newcommand{\FF}{\mathcal{F}}
\newcommand{\KK}{\mathcal{K}}
\newcommand{\LL}{\mathcal{L}}
\newcommand{\id}{\operatorname{id}}
\newcommand{\supp}{\operatorname{supp}}
\newcommand{\lsp}{\operatorname{span}}
\newcommand{\clsp}{\overline{\operatorname{span}}}
\def\dashind{\operatorname{\!-Ind}}
\newtheorem{thm}{Theorem}
\newtheorem{lemma}[thm]{Lemma}
\newtheorem{cor}[thm]{Corollary}
\newtheorem{prop}[thm]{Proposition}
\theoremstyle{definition}
\newtheorem{example}[thm]{Example}
\newtheorem{remark}[thm]{Remark}
\numberwithin{equation}{section}
\numberwithin{thm}{section}
\title[KMS states]{\boldmath{KMS states  on $C^*$-algebras associated to\\ local homeomorphisms}}
\author{Zahra Afsar}
\author{Astrid an Huef}
\author{Iain Raeburn}
\address{Department of Mathematics and Statistics, University of Otago, PO Box~56, Dunedin 9054, New Zealand}
\email{\{zafsar, astrid, iraeburn\}@maths.otago.ac.nz}
\thanks{This research was supported by the Marsden Fund of the Royal Society of New Zealand.}
\date{21 February 2014}
\begin{document}

\begin{abstract}
For every Hilbert bimodule over a $C^*$-algebra, there are natural gauge actions of the circle on the associated Toeplitz algebra and Cuntz-Pimsner algebra, and hence natural dynamics obtained by lifting these gauge actions to actions of the real line. We study the KMS states of these dynamics for a family of bimodules associated to local homeomorphisms on compact spaces. For inverse temperatures larger than a certain critical value, we find a large simplex of KMS states on the Toeplitz algebra, and we show that all KMS states on the Cuntz-Pimsner algebra have inverse temperature at most this critical value. We illustrate our results by considering the backward shift on the one-sided path space of a finite graph, where we can use recent results about KMS states on graph algebras to see what happens below the critical value. Our results about KMS states on the Cuntz-Pimsner algebra of the shift show that recent constraints on the range of inverse temperatures obtained by Thomsen are sharp.
\end{abstract}

\maketitle

\section{Introduction}

We consider actions $\alpha$ of the real line $\R$ by automorphisms of a $C^*$-algebra $A$. When $\alpha$ describes the time evolution in a model of a physical system, the states of the system are given by positive functionals of norm $1$. The equilibrium states are the states on $A$ that satisfy a commutation relation called the KMS condition. This condition makes sense for every dynamical system of the form $(A,\R,\alpha)$, irrespective of its origin, and studying the KMS states of such systems often yields interesting information. This is certainly the case, for example, for the number-theoretic Hecke algebra of Bost and Connes \cite{BC} and its generalisations \cite{L, LLN}, for systems involving gauge actions on graph algebras \cite{EFW, EL, KW, aHLRS}, and for systems associated to local homeomorphisms of the sort arising in topological dynamics \cite{Th1,Th2}.

Many of the systems studied in the papers mentioned above, and especially those associated to directed graphs, have natural analogues involving Toeplitz algebras in which crucial defining equations are relaxed to inequalities. Work of Exel, Laca and Neshveyev \cite{EL, LN} has shown that there is often a much richer supply of KMS states on these Toeplitz algebras, and this has been extended in recent years to various systems arising in number theory \cite{LR, LN2, CDL}. These papers  contain detailed constructions of the KMS states on the various Toeplitz algebras, and re-examination of the techniques has led to similar constructions in a wide range of examples, including graph algebras \cite{aHLRS, aHLRS2}. In this paper, we use similar techniques to construct KMS states on systems of interest in topological dynamics.

We consider a surjective local homeomorphism $h:Z\to Z$ on a compact Hausdorff space $Z$, and an associated $C^*$-algebra that has been variously described as an Exel crossed product \cite{E}, a groupoid algebra \cite{Th1}, or as both a groupoid algebra and a Cuntz-Pimsner 
algebra \cite{D} (for a precise statement, see \cite[Theorem~3.3]{IM} ). Here we view it as the $C^*$-algebra $\OO(X(E))$ of a topological graph $E$, and then we use the graph-based formalism of Katsura \cite{K} in calculations. The algebra $\OO(X(E))$ carries a canonical gauge action of the circle $\T$, which we lift to an action $\alpha$ of $\R$. We are interested in the KMS states on $(\OO(X(E)),\R,\alpha)$ and its Toeplitz analogue $(\TT(X(E)),\R,\alpha)$. 

Several authors have shown that there is a bijection between the KMS states on $(\OO(X(E)),\R,\alpha)$ and the probability measures on $Z$ that satisfy an invariance relation (for example, \cite[Theorem~9.6]{E} and \cite[Theorem~6.2]{Th1}). To find KMS states, one then has to find invariant measures, and existence has been demonstrated using a functional-analytic analogue of the Perron-Frobenius theory (for example, in \cite[\S6.2]{Th1}). Here we show that, for $\beta$ larger than a critical value $\beta_c$, there is a bijection between the KMS$_\beta$ states on $(\TT(X(E)),\R,\alpha)$ and the probability measures on $Z$ which satisfy an inequality that we call the \emph{subinvariance relation}. We then describe a construction of all the measures satisfying the subinvariance relation, and give a spatial construction of the corresponding KMS states. Putting these constructions together gives a parametrisation of the KMS$_\beta$ states of $(\TT(X(E)),\R,\alpha)$ by a concretely-described simplex of measures on $Z$ for every $\beta>\beta_c$ (Theorem~\ref{th1}). 

Our critical value $\beta_c$ is an exponential bound for the number of preimages of points under iteration of the map $h$, and has previously appeared in the dynamics literature (for example, \cite{FFN,Th1}). In particular, Thomsen has shown that $\beta_c$ is an upper bound for the  inverse temperatures of KMS states on $\OO(X(E))$ \cite[Theorem~6.8]{Th1}. So it seems likely that our results on $\TT(X(E))$ are sharp. At $\beta_c$, we can show by taking limits of states on $\TT(X(E))$ that there exist KMS$_{\beta_c}$ states on $(\OO(X(E)),\alpha)$ (Theorem~\ref{existbetac}).

Our approach is inspired by the analysis of KMS states on the Toeplitz-Cuntz-Krieger algebra $\TT C^*(E)$ of a finite directed graph $E$ in \cite{aHLRS}. The usual description of $C^*(E)$ and $\TT C^*(E)$ using a graph correspondence over the finite-dimensional algebra $C(E^0)$ \cite[\S8]{Ra} does not quite fit our present analysis, though there are striking similarities. However, we can also realise $C^*(E)$ in the present setup as the Cuntz-Pimsner algebra $\OO(X(E^\infty))$ associated to the shift $\sigma$ on the infinite-path space $E^\infty$ \cite[Theorem~5.1]{BRV}. We can therefore test our results by reconciling them with the known results for $C^*(E)$. When $E$ is irreducible in the sense that its vertex matrix $A$ is irreducible, there is a unique KMS state on $(C^*(E),\alpha)$, and its inverse temperature is given in terms of the spectral radius of $A$ by $\beta=\ln \rho(A)$. We confirm that, for the local homeomorphism $\sigma:E^\infty\to E^\infty$, our $\beta_c$ is indeed $\ln\rho(A)$ (Proposition~\ref{compbetac}).

Our computation of $\beta_c$ for shifts works for arbitary matrices of nonnegative integers, so we also consider the reducible case, where there is an interesting variety of examples \cite{aHLRS2}. In \cite[Theorem~6.8]{Th1}, Thomsen also provides a lower bound for the set of possible inverse temperatures of KMS states of $C^*(E)$. The  examples in \cite{aHLRS2} show that Thomsen's bounds are sharp, and that many values in between can be attained as well (see \S\ref{sec:below}). Thus we think that graph algebras could provide an interesting supply of fresh examples for the study of KMS states in dynamics. This should be true also for the study of KMS states on Toeplitz algebras, although there is a curious wrinkle: the Toeplitz algebra $\TT C^*(E)$ embeds in $\TT(X(E^\infty))$, but as a proper subalgebra (see Proposition~\ref{includeTalgs}). Nevertheless, our new results are again compatible with those of \cite{aHLRS, aHLRS2}, and indeed every KMS state of $(\TT C^*(E),\alpha)$ is the restriction of a KMS state of $(\TT(X(E^\infty)),\alpha)$ (Corollary~\ref{allrestrics}).
\smallskip

We begin with a short section on notation and conventions. We then look for a characterisation of KMS states which will allow us to recognise them easily. This characterisation could be of independent interest, because it works for the Toeplitz algebras of quite general Hilbert bimodules (Proposition~\ref{KMScrit}). In \S\ref{sec:subinv}, we discuss our subinvariance relation, which involves a measure-theoretic analogue of a Ruelle operator. Importantly, we describe all solutions of this subinvariance relation (Proposition~\ref{propsR}). In \S\ref{sec:Toeplitz}, we prove our main theorem about KMS states on the Toeplitz algebra, and then in \S\ref{sec:crit} we discuss KMS states at the critical inverse temperature. The last two sections contain our results about shifts on the path spaces of graphs.

\section{Notation and conventions}

\subsection{Toeplitz algebras of Hilbert bimodules} Suppose that $X$ is a \emph{Hilbert bimodule} over a $C^*$-algebra $A$, by which we mean that $X$ is a right Hilbert $A$-module $X$ with a left action of $A$ implemented by a homomorphism $\varphi:A\to \LL(X)$
(in other words, $X$ is a correspondence over $A$). For $m\geq 0$, we write $X^{\otimes m}$ for the internal tensor product $X\otimes _A X\otimes_A\cdots\otimes_A X$ of $m$ copies of $X$, which is also a Hilbert bimodule over $A$. A \emph{representation} $(\psi,\pi)$ of a Hilbert bimodule in a $C^*$-algebra $C$ consists of a linear map $\psi:X\to C$ and a homomorphism $\pi:A\to C$ such that 
\[
\psi(a\cdot x\cdot b)=\pi(a)\psi(x)\pi(b)\text{ and }\pi(\langle x, y\rangle)=\psi(x)^*\psi(y)
\]
for every $x,y\in X$ and $a,b\in B$. For each $m\geq 1$, there is a representation $(\psi^{\otimes m},\pi)$ of $X^{\otimes m}$ such that 
\[
\psi^{\otimes m}(x_1\otimes _A x_2\otimes_A\cdots\otimes_A x_m)=\psi(x_1)\psi(x_2)\cdots\psi(x_m).
\]
For $m=0$, we set $X^{\otimes 0}:=A$ and $\psi^{\otimes 0}:=\pi$.

The \emph{Toeplitz algebra} $\TT(X)$ is generated by a universal representation of $X$, which in this paper we always denote by $(\psi,\pi)$. Proposition~1.3 of \cite{FR} says that there is such an algebra $\TT(X)$, and that it carries a gauge action $\gamma:\T\to \Aut \TT(X)$ characterised by $\gamma_z(\psi(x))=z\psi(x)$ and $\gamma_z(\pi(a))=\pi(a)$. By \cite[Lemma~2.4]{FR}, we have
\[
\TT(X)=\clsp\{\psi^{\otimes m}(x)\psi^{\otimes n}(y)^*:m,n\in\N\}.
\]
If $(\theta,\rho)$ is a representation of $X$ in a $C^*$-algebra $C$, we write $\theta\times \rho$ for the representation of $\TT(X)$ in $C$ such that $(\theta\times \rho)\circ\psi=\theta$ and $(\theta\times \rho)\circ\pi=\rho$.

For $x,y\in X$, we write $\Theta_{x,y}$ for the adjointable operator on $X$ given by $\Theta_{x,y}(z)=x\cdot\langle y,z\rangle$, and $\KK(X):=\clsp\{\Theta_{x,y}:x,y\in X\}\subset \LL(X)$. The representation $(\psi,\pi)$ induces a homomorphism $(\psi,\pi)^{(1)}:\KK(X)\to \TT(X)$ such that $(\psi,\pi)^{(1)}(\Theta_{x,y})=\psi(x)\psi(y)^*$. The \emph{Cuntz-Pimsner algebra} $\OO(X)$ is then the quotient of $\TT(X)$ by the ideal generated by 
\[
\big\{\pi(a)-(\psi,\pi)^{(1)}(\varphi(a)):a\in A\text{ satisfies }\varphi(a)\in \KK(X)\big\}.
\]
(Other definitions of the Cuntz-Pimsner algebra have been used in the literature, but for the bimodules considered here we have $\phi(A)\subset \KK(X)$, and all the definitions give the same algebra.)

\subsection{Measures} We will construct KMS states from Borel measures on compact Hausdorff spaces $Z$. All the measures we consider are regular Borel measures and are positive in the sense that they take values in $[0,\infty)$; indeed, they are all finite measures and hence are automatically regular  (by \cite[Theorem~7.8]{F}, for example). We write $M(Z)_{+}$ for the set of finite Borel measures on $Z$. Some of our measures will be defined by integrals, or  as linear functionals on $C(Z)$, from which the Riesz representation theorem \cite[Corollary~7.6]{F} gives us an (automatically regular) Borel measure.  For us, a \emph{probability measure} is simply a Borel measure with total mass $1$. 

\subsection{Topological graphs} A \emph{topological graph} $E=(E^0,E^1,r,s)$ consists of two locally compact Hausdorff spaces, a continuous map $r:E^1\to E^0$ and a local homeomorphism $s:E^1\to E^0$. For paths in $E$, we use the convention of \cite{Ra}, so that a path of length $2$, for example, is a pair $ef$ with $e,f\in E^1$ and $s(e)=r(f)$. We mention this because in his first paper \cite{K}, Katsura used a different convention, and one has to be careful when consulting the literature because there are other conventions out there. Each such graph $E$ has a Hilbert bimodule $X(E)$ described in \cite[Chapter~9]{Ra}. It is usually a completion of $C_c(E^1)$, but here the spaces $E^0$ and $E^1$ are always compact, and then no completion is necessary because the norm on $X(E)$ is equivalent (as a vector-space norm) to the usual supremum norm on $C(E^1)=X(E)$. For reference, we recall that the module actions are given by $(a\cdot x\cdot b)(z)=a(r(z))x(z)b(s(z))$ and the inner product by $\langle x,y\rangle(z)=\sum_{s(w)=z} \overline{x(w)}y(w)$.

\subsection{KMS states} We use the same conventions for KMS states as other recent papers, such as \cite{LR, LRR, aHLRS}, for example. Suppose that $(A,\R,\alpha)$ is a $C^*$-algebraic dynamical system. An element $a$ of $A$ is \emph{analytic} if $t\mapsto \alpha_t(a)$ is the restriction of an entire function $z\mapsto \alpha_z(a)$ on $\C$. A state $\phi$ of $(A,\R,\alpha)$ is a \emph{KMS state with inverse temperature $\beta$} (or a KMS$_\beta$ state) if $\phi(ab)=\phi(b\alpha_{i\beta}(a))$ for all analytic elements $a,b$. Crucially, it suffices to check this condition for $a,b$ in a family $\FF$ of analytic elements which span a dense subspace of $B$, and it is usually easy to find a good supply of such elements. %For us, KMS$_\infty$ states are limits of KMS$_\beta$ states as $\beta\to \infty$, and ground states are those for which $z\mapsto \phi(a\alpha_z(b))$ is bounded in the upper-half plane for all $a,b\in \FF$.

\section{A characterisation of KMS states}

The following result is similar to \cite[Proposition~2.1(a)]{aHLRS} and \cite[Proposition~4.1]{LRRW}, but is substantially more general. (We have learned that Mitch Hawkins has independently proved a similar result for the bimodules $X(E)$ of topological graphs.)

\begin{prop}\label{KMScrit}
Suppose that $X$ is a Hilbert bimodule over a $C^*$-algebra $A$, and $\alpha:\R\to \Aut A$ is given in terms of the gauge action $\gamma$ by $\alpha_t=\gamma_{e^{it}}$. Suppose $\beta>0$ and $\phi$ is a state on $\TT(X)$. Then $\phi$ is a KMS$_\beta$ state of $(\TT(X),\alpha)$ if and only if $\phi\circ \pi$ is a trace on $A$ and
\begin{equation}\label{commrel}
\phi\big(\psi^{\otimes l}(x)\psi^{\otimes m}(y)^*\big) =  \begin{cases}
0  & \text{ if $m\neq l$}\\
e^{-\beta m}\phi\circ\pi(\langle y,x\rangle_{A}) &  \text{ if  $m=l$.}
\end{cases}
\end{equation}
\end{prop}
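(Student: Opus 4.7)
The strategy is standard for algebras graded by a circle action. For the forward direction, suppose $\phi$ is KMS$_\beta$. Since $\gamma_z$ fixes $\pi(A)$, we have $\alpha_z \circ \pi = \pi$ for all $z \in \C$, and the KMS condition applied to $\pi(a), \pi(b)$ gives $\phi(\pi(a)\pi(b)) = \phi(\pi(b)\pi(a))$, which is the trace property. Next, $\phi \circ \alpha_t = \phi$ (a standard consequence of KMS), so when $l \neq m$ the identity $\phi(\psi^{\otimes l}(x)\psi^{\otimes m}(y)^*) = e^{it(l-m)}\phi(\psi^{\otimes l}(x)\psi^{\otimes m}(y)^*)$ forces $\phi$ to vanish on such elements. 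When $l = m$, apply the KMS condition to $\psi^{\otimes l}(x)$ and $\psi^{\otimes l}(y)^*$: using $\alpha_{i\beta}(\psi^{\otimes l}(x)) = e^{-\beta l}\psi^{\otimes l}(x)$ together with $\psi^{\otimes l}(y)^*\psi^{\otimes l}(x) = \pi(\langle y, x\rangle)$ (since $(\psi^{\otimes l}, \pi)$ is a representation of $X^{\otimes l}$) yields exactly \eqref{commrel}.

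For the converse, each generator $\psi^{\otimes m}(x)\psi^{\otimes n}(y)^*$ is analytic with $\alpha_z$-eigenvalue $e^{iz(m-n)}$, so their span is a dense subspace of analytic elements, and it suffices to verify $\phi(ab) = \phi(b\alpha_{i\beta}(a))$ for $a, b$ in this family. Writing $a = \psi^{\otimes m_1}(x_1)\psi^{\otimes n_1}(y_1)^*$ and $b = \psi^{\otimes m_2}(x_2)\psi^{\otimes n_2}(y_2)^*$, the condition becomes $\phi(ab) = e^{-\beta(m_1 - n_1)}\phi(ba)$. The pivotal computation is to collapse the middle factor $\psi^{\otimes n_1}(y_1)^*\psi^{\otimes m_2}(x_2)$ in $ab$: if $n_1 = m_2$, it equals $\pi(\langle y_1, x_2\rangle)$, and $ab$ becomes the single generator $\psi^{\otimes m_1}(x_1 \cdot \langle y_1, x_2\rangle)\psi^{\otimes n_2}(y_2)^*$; if $n_1 \neq m_2$, approximate the longer of $x_2$ or $y_1$ in norm by finite sums of simple tensors in $X^{\otimes \min(n_1,m_2)} \otimes_A X^{\otimes |n_1 - m_2|}$ and perform the analogous reduction.

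After applying \eqref{commrel} and continuity, $\phi(ab)$ is expressed as $e^{-\beta M}(\phi \circ \pi)(c)$ for some $M$ and some $c \in A$ built from the $x_i, y_j$ via inner products and module actions; this is nonzero only when the grading balances, namely $m_1 + m_2 = n_1 + n_2$. The parallel computation of $\phi(ba)$, together with invocations of the trace property of $\phi \circ \pi$ to cyclically permute the inner-product factors, then yields the desired identity, with the $\beta$-exponents tracking the gauge grading to produce the factor $e^{-\beta(m_1-n_1)}$.

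The main obstacle is bookkeeping in the tensor-product decompositions when $n_1 \neq m_2$: one must track how left and right module actions thread through the reductions and verify that the limits converge appropriately. The trace property of $\phi \circ \pi$ is essential throughout, as it supplies the cyclicity needed to convert $\phi(ab)$ into $e^{-\beta(m_1-n_1)}\phi(ba)$.
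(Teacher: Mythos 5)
Your forward direction is correct and matches the paper's (the paper derives the vanishing for $l\neq m$ by applying the KMS condition twice and using $\beta>0$ rather than invoking $\alpha$-invariance, but both are fine), and your converse has the right skeleton: reduce to spanning monomials, collapse the middle factor $\psi^{\otimes n_1}(y_1)^*\psi^{\otimes m_2}(x_2)$ after splitting the longer tensor, and apply \eqref{commrel} to each side. The gap is in the one step you actually have to get right at the end. You claim the identification of $\phi(ab)$ with $e^{-\beta(m_1-n_1)}\phi(ba)$ comes from ``the trace property of $\phi\circ\pi$ to cyclically permute the inner-product factors,'' and that this is ``essential throughout.'' It is not, and it would not close the argument. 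After the reductions, the two sides take the form $e^{-\beta p}\,\phi\circ\pi\big(\langle t'',\langle t',x\rangle\cdot(\langle y,s'\rangle\cdot s'')\rangle\big)$ and $e^{-\beta n}\,\phi\circ\pi\big(\langle \langle x,t'\rangle\cdot t'',\langle y,s'\rangle\cdot s''\rangle\big)$; the arguments of $\phi\circ\pi$ are not products of elements of $A$ related by a cyclic permutation, so the trace property has nothing to act on. What actually makes them match is the identity
\begin{equation*}
\big\langle \langle x,t'\rangle\cdot t'',\,w\big\rangle=\big\langle t'',\,\langle t',x\rangle\cdot w\big\rangle,
\end{equation*}
i.e.\ the fact that the left action of $\langle x,t'\rangle$ is an adjointable operator with adjoint the left action of $\langle t',x\rangle=\langle x,t'\rangle^*$. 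This gives equality of the two arguments as elements of $A$ before $\phi\circ\pi$ is applied, and then the exponents $e^{-\beta p}$ versus $e^{-\beta n}$ supply exactly the factor $e^{-\beta(l-m)}$ since $l+n=m+p$. The trace property of $\phi\circ\pi$ is genuinely needed only in the degenerate case where both monomials lie in $\pi(A)$ (all tensor degrees zero), where the KMS condition reduces to $\phi(\pi(a_1)\pi(a_2))=\phi(\pi(a_2)\pi(a_1))$. One further small omission: \eqref{commrel} only forces both $\phi(ab)$ and $\phi(ba)$ to vanish unless $m_1+m_2=n_1+n_2$, and to avoid treating all orderings of the degrees you should reduce to, say, $n_1\le m_2$ by taking adjoints and using $\overline{\phi(ab)}=\phi(b^*a^*)$; this is routine but needs to be said.
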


\begin{proof}
First suppose that $\phi$ is a KMS$_\beta$ state. For $a\in A$, $\alpha_t(\pi(a))=\pi(a)$ for all $t\in \R$, and hence for all $t\in \C$. Thus the KMS relation says that $\phi\circ\pi$ is a trace. Two applications of the KMS relation give
\begin{align*}
\phi\big(\psi^{\otimes l}(x)\psi^{\otimes m}(y)^*\big)
&=\phi\big(\psi^{\otimes m}(y)^*\alpha_{i\beta}(\psi^{\otimes l}(x))\big)\\
&=e^{-\beta l}\phi\big(\psi^{\otimes m}(y)^*\psi^{\otimes l}(x)\big)\\
&=e^{-\beta (l-m)}\phi\big(\psi^{\otimes l}(x)\psi^{\otimes m}(y)^*\big),
\end{align*}
which because $\beta>0$ implies that both sides vanish for $m\neq l$. Now for $m=l$, the Toeplitz relation for  $(\psi,\pi)$ implies that
\begin{align*}
\phi\big(\psi^{\otimes m}(x)\psi^{\otimes m}(y)^*\big)
=e^{-\beta m}\phi\big(\psi^{\otimes m}(y)^*\psi^{\otimes m}(x)\big)=e^{-\beta m}\phi\big(\pi(\langle y,x\rangle_{A})\big),
\end{align*}
and $\phi$ satisfies \eqref{commrel}.

Next we suppose that $\phi\circ \pi$ is a trace and that $\phi$ satisfies \eqref{commrel}. It suffices for us to prove that
\begin{equation}\label{KMSrel}
\phi(bc)=e^{-\beta(l-m)}\phi(cb)
\end{equation}
for $b=\psi^{\otimes l}(x)\psi^{\otimes m}(y)^*$ and $c=\psi^{\otimes n}(s)\psi^{\otimes p}(t)^*$, where $x$, $y$, $s$ and $t$ are elementary tensors. (When $b$ and/or $c$ lie in $\pi(A)$, this is relatively straightforward because $\phi\circ\pi$ is a trace and $\alpha$ fixes $\pi(A)$.) Formula~\ref{commrel} implies that both sides of \eqref{KMSrel} vanish unless $l+n=m+p$, and hence we assume this from now on. We also assume that $m\leq n$. To see that this suffices, suppose that we have dealt with the case $m\leq n$, and consider $m>n$. Then $\overline{\phi(a)}=\phi(a^*)$ implies that
\begin{align*}
\overline{\phi(bc)}&=\phi(c^*b^*)=\phi\big(\psi^{\otimes p}(t)\psi^{\otimes n}(s)^*\psi^{\otimes m}(y)\psi^{\otimes l}(x)^*\big),
\end{align*}
and we are back in the other case. Thus 
\begin{align*}
\overline{\phi(bc)}&=e^{-\beta(p-n)}\phi\big(\psi^{\otimes m}(y)\psi^{\otimes l}(x)^*\psi^{\otimes p}(t)\psi^{\otimes n}(s)^*\big)\\
&=e^{-\beta(p-n)}\phi(b^*c^*)=\overline{e^{-\beta(p-n)}\phi(cb)};
\end{align*}
since $l+n=m+p$, we have $p-n=l-m$, and we have \eqref{KMSrel}. So it does suffice to prove \eqref{KMSrel} when $m\leq n$.

So we assume that $l+n=m+p$ and $m\leq n$. Then we also have $p\geq l$. Since we are dealing with elementary tensors, we may write $s=s'\otimes s''\in X^{\otimes m}\otimes X^{\otimes(n-m)}$ and $t=t'\otimes t''\in X^{\otimes l}\otimes X^{\otimes(p-l)}$. (If $m=n$ then $p=l$ and we can dispense with this step.) Now we compute, remembering that $p=l+(n-m)$:
\begin{align*}
\phi(bc)&=\phi\big(\psi^{\otimes l}(x)\psi^{\otimes m}(y)^*\psi^{\otimes m}(s')\psi^{\otimes (n-m)}(s'')\psi^{\otimes p}(t)^*\big)\\
&=\phi\big(\psi^{\otimes l}(x)\pi(\langle y,s'\rangle)\psi^{\otimes (n-m)}(s'')\psi^{\otimes p}(t)^*\big)\\
&=\phi\big(\psi^{\otimes l}(x)\psi^{\otimes (n-m)}(\langle y,s'\rangle\cdot s'')\psi^{\otimes p}(t)^*\big)\\
&=e^{-\beta p}\phi\circ \pi\big(\big\langle t'\otimes t'', x\otimes(\langle y,s'\rangle\cdot s'')\big\rangle\big)\quad\text{(using \eqref{commrel}\,)}\\
&=e^{-\beta p}\phi\circ \pi\big(\big\langle t'',\langle t',x\rangle\cdot(\langle y,s'\rangle\cdot s'')\big\rangle\big).
\end{align*}
A similar computation (but using the slightly less obvious identity $\psi(\xi)^*\pi(a)=\psi(a^*\cdot \xi)^*$\,) gives:
\begin{align*}
\phi(cb)&=\phi\big(\psi^{\otimes n}(s)\psi^{\otimes (p-l)}(t'')^*\psi^{\otimes l}(t')^*\psi^{\otimes l}(x)\psi^{\otimes m}(y)^*\big)\\
&=\phi\big(\psi^{\otimes n}(s)\psi^{\otimes (p-l)}(t'')^*\pi(\langle t',x\rangle)\psi^{\otimes m}(y)^*\big)\\
&=\phi\big(\psi^{\otimes n}(s)\psi^{\otimes (p-l)}(\langle x,t'\rangle\cdot t'')^*\psi^{\otimes m}(y)^*\big)\\
&=e^{-\beta n}\phi\circ \pi\big(\big\langle y\otimes(\langle x,t'\rangle\cdot t''),s'\otimes s''\big\rangle\big)\\
&=e^{-\beta n}\phi\circ \pi\big(\big\langle \langle x,t'\rangle\cdot t'',\langle y,s'\rangle\cdot s''\big\rangle\big).
\end{align*}
Since the left action is by adjointable operators, we have
\[
\big\langle \langle x,t'\rangle\cdot t'',\langle y,s'\rangle\cdot s''\big\rangle=\big\langle t'',\langle t',x\rangle\cdot(\langle y,s'\rangle\cdot s'')\big\rangle,
\]
and we deduce from our two calculations that $e^{\beta p}\phi(bc)=e^{\beta n}\phi(cb)$. Since $n-p=m-l$, this is precisely \eqref{KMSrel}.
\end{proof}

\section{KMS states and the subinvariance relation}\label{sec:subinv}

 Suppose $\nu$ is a finite regular Borel measure on a compact Hausdorff space $Z$ and $h:Z\to Z$ is a surjective local homeomorphism. Define $f:C(Z)\rightarrow  \C$ by
 \[
f(a)=\int\sum_{h(w)=z}a(w)\,d\nu(z)\text{ for $a\in C(Z)$.}
\]
Then $f$ is a positive linear functional on $C(Z)$, and hence the Riesz representation theorem (for example, \cite[Theorem~7.2]{F}) says there is a unique finite regular Borel measure $R\nu$ on $Z$ such that
\begin{equation}\label{defR}
\int a\,d(R\nu)=f(a)=\int\sum_{h(w)=z}a(w)\,d\nu(z) \text{ for $a\in C(Z)$.}
\end{equation}
The operation $R$ on measures is affine and positive, and satisfies $\|R\nu\|\leq c_1\|\nu\|$ for the dual norm on $C(Z)^*$, where $c_1:=\max_{z\in Z}|h^{-1}(z)|$. Similar operations appear throughout the analysis of KMS states in dynamics (for example, in \cite[Theorem~6.2]{Th1}), and are sometimes described as ``Ruelle operators''.

\begin{prop}\label{tosubinv}
Suppose that $h:Z\to Z$ is a surjective local homeomorphism on a compact Hausdorff space $Z$. Let $E$ be the topological graph $(Z,Z,\id,h)$ and $X(E)$ the graph correspondence. Define $\alpha:\R\to \Aut \TT (X(E))$ in terms of the gauge action by $\alpha_t=\gamma_{e^{it}}$. Suppose that $\phi$ is a KMS$_\beta$ state on $(\TT({X(E)}),\alpha)$, and $\mu$ is the probability  measure on $Z$ such that $\phi(\pi(a))=\int a\,d\mu$ for all $a\in C(Z)$. Then the measure $R\mu$ satisfies
\begin{align}\label{0}
\int a\,d(R\mu)\leq e^{\beta}\int a\,d\mu\text{ for all positive $a$ in $C(Z)$.}
\end{align}
\end{prop}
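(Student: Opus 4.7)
\medskip
\noindent\textbf{Proof plan.} The plan is to produce, for each positive $a\in C(Z)$, a finite sum of rank-one operators $\Theta_{\xi_i,\xi_i}$ on $X(E)$ that adds up to $\varphi(a)$, and then invoke a general positivity inequality in the Toeplitz algebra together with the KMS formula from Proposition~\ref{KMScrit}. Because $h$ is a local homeomorphism and $Z$ is compact, I can choose a finite open cover $V_1,\dots,V_n$ of $Z$ such that $h|_{V_i}$ is a homeomorphism onto its image, together with a partition of unity $\{\rho_i\}$ subordinate to this cover. Given $a\in C(Z)$ with $a\ge 0$, set
\[
\xi_i:=(\rho_i a)^{1/2}\in C(Z)=X(E).
\]

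The key computation is that, because $\xi_i$ is supported where $h$ is injective, the rank-one operator $\Theta_{\xi_i,\xi_i}$ reduces to a multiplication operator: using the explicit formulas for the inner product and left action in $X(E)$, for any $x\in C(Z)$ and $z\in V_i$ one finds $\Theta_{\xi_i,\xi_i}(x)(z)=\xi_i(z)\sum_{h(w)=h(z)}\overline{\xi_i(w)}x(w)=|\xi_i(z)|^2 x(z)$, since the only $w\in\supp\xi_i$ with $h(w)=h(z)$ is $w=z$; for $z\notin\supp\xi_i$ both sides vanish. Hence $\Theta_{\xi_i,\xi_i}=\varphi(\rho_i a)$, and summing over $i$ yields $\sum_i\Theta_{\xi_i,\xi_i}=\varphi(a)$ in $\LL(X(E))$.

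Next I use the standard fact that in the Toeplitz algebra of any Hilbert bimodule, $\pi(a)\ge(\psi,\pi)^{(1)}(\varphi(a))$ for positive $a\in A$ with $\varphi(a)\in\KK(X)$; in the Fock representation the difference acts as multiplication by $a$ on $X^{\otimes 0}=A$ and as $0$ on higher tensor powers. Applied here this gives
\[
\pi(a)\ \ge\ (\psi,\pi)^{(1)}(\varphi(a))\ =\ \sum_i(\psi,\pi)^{(1)}(\Theta_{\xi_i,\xi_i})\ =\ \sum_i \psi(\xi_i)\psi(\xi_i)^*.
\]
Applying $\phi$, the left side is $\int a\,d\mu$, while Proposition~\ref{KMScrit} evaluates each term on the right as $e^{-\beta}\phi(\pi(\langle\xi_i,\xi_i\rangle))$. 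A direct calculation gives $\langle\xi_i,\xi_i\rangle(z)=\sum_{h(w)=z}\rho_i(w)a(w)$, so summing in $i$ and exchanging the sums yields $\sum_i\langle\xi_i,\xi_i\rangle(z)=\sum_{h(w)=z}a(w)$, and hence $\sum_i\phi(\pi(\langle\xi_i,\xi_i\rangle))=\int a\,d(R\mu)$ by the defining relation~\eqref{defR}. Combining, $\int a\,d(R\mu)\le e^\beta\int a\,d\mu$, which is~\eqref{0}.

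\medskip
\noindent\textbf{Main obstacle.} The only non-routine step is the identity $\Theta_{\xi_i,\xi_i}=\varphi(\rho_i a)$: it is this support-localization that converts a genuinely rank-one operator into a multiplication operator and makes $\varphi(a)$ decompose as a sum of $\Theta$'s. Once this is in hand, the positivity inequality $\pi(a)\ge(\psi,\pi)^{(1)}(\varphi(a))$ in $\TT(X)$ and the KMS formula from Proposition~\ref{KMScrit} do all the remaining work, and one sees immediately why the subinvariance is an inequality (traceable to the gap between $\pi(a)$ and $(\psi,\pi)^{(1)}(\varphi(a))$ in the Toeplitz algebra) rather than the equality one gets on $\OO(X(E))$.
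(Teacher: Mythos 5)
Your proof is correct and follows essentially the same route as the paper: a partition of unity subordinate to a cover on which $h$ is injective, a resulting finite-rank decomposition of the left action of $a$, the Fock-representation positivity $\pi(a)\geq(\psi,\pi)^{(1)}(\varphi(a))$, and the KMS formula of Proposition~\ref{KMScrit} to convert $\phi\big(\psi(\xi_i)\psi(\xi_i)^*\big)$ into $e^{-\beta}\int\langle\xi_i,\xi_i\rangle\,d\mu$. The only (cosmetic) difference is that you symmetrize by taking $\xi_i=(\rho_i a)^{1/2}$ so that $\varphi(a)=\sum_i\Theta_{\xi_i,\xi_i}$ exactly, whereas the paper takes $\xi_i=\sqrt{\rho_i}$ and works with $\sum_i\psi(a\cdot\xi_i)\psi(\xi_i)^*$; both rest on the same support-localization identity.
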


\begin{proof}
Suppose that $a\in C(Z)$ and $a\geq 0$. We begin by writing the integrand $\sum_{h(w)=z} a(w)$ in \eqref{defR} in terms of the inner product in $X(E)$. Let $\{U_i\}_{i=0}^k$ be an open cover of $Z$ such that $h|_{U_i}$ is injective, and choose a partition of unity $\{\rho_i\}$ subordinate to $\{U_i\}$. Define $\xi_i\in X(E)$ by $\xi_i=\sqrt{\rho_i}$. Then
\begin{align*}
\sum_{h(w)=z} a(w)\notag&=\sum_{h(w)=z}\sum_{i=0}^k \xi_i(w)^2 a(w)=\sum_{i=0}^k\sum_{h(w)=z} \xi_i(w)^2 a(w)\\
&=\sum_{i=0}^k\sum_{h(w)=z} \overline{\xi_i(w)}(a\cdot\xi_i)(w)
=\sum_{i=0}^k \langle\xi_i,a\cdot\xi_i\rangle(z).
\end{align*}
Thus
\begin{align*}
\int a\,d(R\mu)\notag&=\int \sum_{h(w)=z} a(w)\,d\mu(z)=\int \sum_{i=0}^k\langle\xi_i,a\cdot\xi_i\rangle(z)\,d\mu(z)\\
\notag&=\phi \Big(\pi\Big(\sum_{i=0}^k\langle\xi_i,a\cdot\xi_i\rangle\Big)\Big)=\sum_{i=0}^k \phi(\psi(\xi_i)^*\psi(a\cdot\xi_i)).
\end{align*}
Now, since $\phi$ is a KMS$_\beta$ state, we have
\begin{align}\label{equ40}
\int a\,d(R\mu)=\sum_{i=0}^k e^{\beta}\phi(\psi(a\cdot\xi_i)\psi(\xi_i)^*).
\end{align}

Our next task is to compare the operator $\sum_{i=0}^k \psi(a\cdot\xi_i)\psi(\xi_i)^*$ appearing on the right-hand side of \eqref{equ40} with $\pi(a)$. For this, we use the Fock representation $(T,\varphi_\infty)$ of $\TT(X(E))$ from \cite[Example~1.4]{FR}. As a right $A$-module, $F(X(E))$ is the Hilbert module direct sum $\bigoplus_{n=0}^\infty X(E)^{\otimes n}$, with the left action of $A$ by diagonal operators giving a homomorphism $\varphi_\infty:A\to \LL(F(X(E)))$. The homomorphism $T:X(E)\to \LL(F(X(E)))$ sends $x\in X(E)$ to the creation operator $T(x):y\mapsto x\otimes_A y$, and $T\times \varphi_\infty$ is an injection on $\TT(X(E))$ \cite[Corollary~2.2]{FR}.

Let $n\geq 1$ and $x=x_1\otimes\dots\otimes x_n\in  X(E)^{\otimes n}$. Then
\begin{align*}\label{equ41}
\sum_{i=0}^k T( a\cdot\xi_i)T(\xi_i)^*(x)\notag&=\sum_{i=0}^k T( a\cdot\xi_i)\big(\langle\xi_i,x_1\rangle\cdot x_2\otimes\dots\otimes x_n\big)\\
&=\sum_{i=0}^k  \big(a\cdot\xi_i\cdot\langle\xi_i,x_1\rangle\big)\otimes x_2\otimes\dots\otimes x_n.
\end{align*}
Since $h|_{U_i}$ is injective and $\supp\xi_i\subset U_i,$ we have
\[
(\xi_i\cdot\langle\xi_i,x_1\rangle)(z)=\xi_i(z)\langle\xi_i,x_1\rangle(h(z))\notag=\xi_i(z)\sum_{h(w)=h(z)} \overline{\xi_i(w)}x_1(w)=\xi_i(z)^2x_1(z).
\]
Thus 
\begin{align*}\label{equ43}
\sum_{i=0}^k T( a\cdot\xi_i)T(\xi_i)^*(x)\notag&=\sum_{i=0}^k a\cdot(\xi_i^2x_1\otimes x_2\otimes\dots\otimes x_n)=a\cdot x=\varphi_\infty(a)(x).
\end{align*}
Thus $\sum_{i=0}^k T( a\cdot\xi_i)T(\xi_i)^* = \varphi_\infty(a)$ as operators on   $X(E)^{\otimes n}$ for $n\geq 1$. Since each $T( a\cdot\xi_i)T(\xi_i)^*$ vanishes on $C(Z)=X(E)^{\otimes 0}$ and $a$ is positive, we have
\begin{equation*}
\sum_{i=0}^k T( a\cdot\xi_i)T(\xi_i)^* \leq \varphi_\infty(a)\quad\text{in $\LL(F(X(E)))$;}
\end{equation*}
since the homomorphism $T\times\varphi_\infty$ is faithful, we deduce that
\begin{equation}\label{ineqinTT}
\sum_{i=0}^k \psi( a\cdot\xi_i)\psi(\xi_i)^* \leq \pi(a)\quad\text{in $\TT(X(E))$.}
\end{equation}

To finish off, we apply $\phi$ to \eqref{ineqinTT}:
\begin{equation*}\label{2}
\phi\Big(\sum_{i=0}^k \psi( a\cdot\xi_i)\psi(\xi_i)^*\Big) \leq \phi(\pi(a))=\int a\,d\mu. 
\end{equation*}
On the other hand, \eqref{equ40} implies that
\[
\phi\Big(\sum_{i=0}^k \psi( a\cdot\xi_i)\psi(\xi_i)^*\Big)=e^{-\beta}\int a\,d(R\mu),
\]
and the result follows from the last two displays.
\end{proof}

When $Z$ is a finite set and $A$ is a nonnegative matrix, $\mu$ is a vector in $[0,\infty)^{Z}$, and the relation \eqref{0} in the form $A\mu\leq e^{\beta}\mu$ says that $\mu$ is a subinvariant vector for $A$ in the sense of Perron-Frobenius theory. Subinvariant vectors played an important role in the analysis of KMS states on the Toeplitz algebras of graphs in \cite[\S2]{aHLRS}, and \eqref{0} will play a similar role in our analysis. So we shall refer to \eqref{0} as the \emph{subinvariance relation}. 

We now show how to construct the probability measures which satisfy the subinvariance relation. Proposition~\ref{propsR} is an analogue for our operation $R$ on measures of \cite[Theorem~3.1(a)]{aHLRS}, which is about the subinvariance relation for the vertex matrix of a finite directed graph. Here the powers $R^n$ are defined inductively by $R^{n+1}\nu=R(R^n\nu)$, and then we have
\begin{equation}\label{Rn}
\int a\,d(R^n\nu)=\int\sum_{h^n(w)=z}a(w)\,d\nu(z)\quad\text{for $a\in C(Z)$.}
\end{equation}

\begin{prop}\label{propsR} 
Suppose that $h:Z\to Z$ is a surjective local homeomorphism on a compact Hausdorff space $Z$. Let
\begin{equation}\label{defcrit}
\beta_c:=\limsup_{n\to \infty}\Big(n^{-1}\ln\Big(\max_{z\in Z}|h^{-n}(z)|\Big)\Big),
\end{equation}
and suppose that $\beta>\beta_c$.
\begin{enumerate}
\item\label{fbeta} The series $\sum_{n=0}^\infty e^{-\beta n}|h^{-n}(z)|$ converges uniformly for $z\in Z$ to a continuous function $f_\beta(z)$, which satisfies
\begin{equation}\label{lowerbd}
f_\beta(z)-\sum_{h(w)=z}e^{-\beta}f_\beta(w)= 1\quad\text{for all $z\in Z$.}
\end{equation}
\item\label{resolvent} Suppose that $\varepsilon$ is a finite regular Borel measure on $Z$. Then the series $\sum_{n=0}^\infty e^{-\beta n}R^n\varepsilon$ converges in norm in the dual space $C(Z)^*$ with sum $\mu$, say. Then $\mu$ satisfies the subinvariance relation \eqref{0}, and we have $\varepsilon=\mu-e^{-\beta}R\mu$. Then $\mu$ is a probability measure if and only if $\int f_\beta
\,d\varepsilon=1$.
\item\label{positivemass} 
Suppose that $\mu$ is a probability measure which satisfies the subinvariance relation \eqref{0}. Then $\varepsilon=\mu-e^{-\beta}R\mu$ is a finite regular Borel measure satisfying $\int f_\beta\,d\varepsilon=1$, and we have $\mu=\sum_{n=0}^\infty e^{-\beta n}R^n\varepsilon$.
\end{enumerate}
\end{prop}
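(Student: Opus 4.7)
The plan is to prove the three assertions in the natural order, with (c) reducing to an application of (b) for a cleverly chosen $\varepsilon$. The only real analytic ingredient is uniform convergence of the series defining $f_\beta$ and $\sum e^{-\beta n}R^n\varepsilon$; beyond that, the work splits between algebraic manipulations with $R$ and a uniqueness argument in (c) that is the only place where the hypothesis $\beta>\beta_c$ enters decisively.

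For (a) I would pick $\beta_c<\beta'<\beta$ and use the definition of $\beta_c$ to find $N$ so that $\max_{z\in Z}|h^{-n}(z)|\le e^{\beta'n}$ for all $n\ge N$. The map $z\mapsto|h^{-n}(z)|$ is locally constant because $h$ (hence $h^n$) is a local homeomorphism on the compact space $Z$, and the estimate $e^{-\beta n}|h^{-n}(z)|\le e^{-(\beta-\beta')n}$ gives uniform convergence by the Weierstrass $M$-test, so $f_\beta$ is continuous. Formula \eqref{lowerbd} then comes from separating off the $n=0$ term and using $\sum_{h(w)=z}|h^{-n}(w)|=|h^{-(n+1)}(z)|$ to reindex. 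For (b), the same estimate gives $\|R^n\varepsilon\|\le\max_{z\in Z}|h^{-n}(z)|\,\|\varepsilon\|$, so $\mu:=\sum_{n=0}^\infty e^{-\beta n}R^n\varepsilon$ converges absolutely in $C(Z)^*$. Since $R$ is a bounded positive operator, $R\mu=e^{\beta}\sum_{n=1}^\infty e^{-\beta n}R^n\varepsilon=e^{\beta}(\mu-\varepsilon)$, hence $\varepsilon=\mu-e^{-\beta}R\mu$; positivity of $\varepsilon$ then yields the subinvariance relation \eqref{0}. Testing the series for $\mu$ against the constant function $1$ and swapping sum and integral using the uniform convergence from (a) gives $\mu(Z)=\int f_\beta\,d\varepsilon$, so $\mu$ is a probability measure exactly when $\int f_\beta\,d\varepsilon=1$.

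For (c), the subinvariance relation says that $a\mapsto\int a\,d\mu-e^{-\beta}\int a\,d(R\mu)$ is a positive linear functional on $C(Z)$, so Riesz produces a finite positive Borel measure $\varepsilon=\mu-e^{-\beta}R\mu$. Applying (b) to this $\varepsilon$ yields a measure $\nu:=\sum_{n=0}^\infty e^{-\beta n}R^n\varepsilon$ with $\varepsilon=\nu-e^{-\beta}R\nu$, and the partial sums telescope as $\sum_{n=0}^N e^{-\beta n}R^n\varepsilon=\mu-e^{-\beta(N+1)}R^{N+1}\mu$, so $\tau:=\mu-\nu$ is the norm limit in $C(Z)^*$ of the positive measures $e^{-\beta(N+1)}R^{N+1}\mu$ and is itself positive; norm continuity of $R$ gives $e^{-\beta}R\tau=\tau$, so $R^n\tau=e^{n\beta}\tau$ for every $n\ge 0$. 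The hard part is showing $\tau=0$, and this is where $\beta>\beta_c$ is used decisively: evaluating $R^n\tau=e^{n\beta}\tau$ on the constant function $1$ yields $e^{n\beta}\tau(Z)=\int|h^{-n}(z)|\,d\tau(z)\le\max_{z\in Z}|h^{-n}(z)|\,\tau(Z)$, so if $\tau(Z)>0$ then taking logarithms, dividing by $n$, and taking $\limsup$ gives $\beta\le\beta_c$, contradicting the hypothesis. Hence $\tau(Z)=0$, positivity forces $\tau=0$, so $\mu=\nu$, and the probability-measure criterion from (b) applied to $\nu=\mu$ gives $\int f_\beta\,d\varepsilon=1$.
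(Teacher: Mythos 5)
Your proof is correct, and parts (a) and (b) follow the paper's argument essentially verbatim (geometric comparison against $e^{-(\beta-\beta')n}$, local constancy of $z\mapsto|h^{-n}(z)|$, reindexing for \eqref{lowerbd}, and telescoping for $\varepsilon=\mu-e^{-\beta}R\mu$). In part (c) you take a slightly different route: the paper first computes $\int f_\beta\,d\varepsilon=\int\big(f_\beta(z)-\sum_{h(w)=z}e^{-\beta}f_\beta(w)\big)\,d\mu(z)=\mu(Z)=1$ directly from \eqref{lowerbd}, and then obtains $\mu=\sum_n e^{-\beta n}R^n\varepsilon$ by subtracting two series that are absolutely convergent by part (b) applied to the finite measure $\mu$; you instead prove $\mu=\nu$ first and then read off $\int f_\beta\,d\varepsilon=1$ from the mass criterion in (b). Both orderings work. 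The one place where you do more than is needed is the uniqueness step: having identified $\tau=\mu-\nu$ as the norm limit of $e^{-\beta(N+1)}R^{N+1}\mu$, you could conclude $\tau=0$ immediately from the estimate $\|e^{-\beta(N+1)}R^{N+1}\mu\|\leq e^{-\beta(N+1)}c_{N+1}\|\mu\|\leq e^{-\delta(N+1)}\|\mu\|\to 0$ already used for convergence in (b); the detour through the eigenvector equation $R\tau=e^{\beta}\tau$ and the growth comparison with $\max_z|h^{-n}(z)|$ is valid but redundant. It does, however, make explicit the pleasant fact that the invariance relation $R\mu=e^{\beta}\mu$ has no nonzero solutions for $\beta>\beta_c$, which the paper records separately in the remark following the proposition.
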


\begin{remark}
Part~\eqref{resolvent} applies when $\epsilon=0$, and gives $\mu=0$. However, it is implicit in part~\eqref{positivemass} that $\epsilon$ is not zero (because $\int f_\beta\,d\varepsilon=1$), and hence $\mu\not=e^{-\beta}R\mu$. Thus part~\eqref{positivemass} implies that the invariance relation $R\mu=e^{\beta}\mu$ has no solutions\footnote{The analogue of Proposition~\ref{tosubinv} for the Cuntz-Pimsner algebra will say that the measure $\mu$ satisfies the invariance relation. Thus Proposition~\ref{propsR}\eqref{positivemass} will imply that there are no KMS$_\beta$ states on $\OO(X(E))$ for $\beta> \beta_c$. This is consistent with \cite[Theorem~6.8]{Th1} and our Corollary~\ref{crittemp}.} for $\beta>\beta_c$.
\end{remark}

\begin{proof}
We first claim that there exist $\delta>0$ and $K\in \N$ such that
\begin{equation}\label{estinvimage}
m\geq K\Longrightarrow e^{-\beta m}|h^{-m}(z)|< e^{-\delta m}\quad\text{for all $z\in Z$.}
\end{equation}
Write $c_n:=\max_{z\in Z}|h^{-n}(z)|$, so that $\beta>\beta_c$ means $\beta>\limsup n^{-1}\ln c_n$. Then for large $n$, we have $\beta>\sup_{m\geq n}m^{-1}\ln c_m$. Thus there exist $\delta>0$ and $K$ such that
\begin{align*}
m\geq K&\Longrightarrow \beta-\delta>m^{-1}\ln c_m\Longrightarrow c_m< e^{\beta m-\delta m}\\
&\Longrightarrow e^{-\beta m}|h^{-m}(z)|< e^{-\delta m}\quad\text{for all $z\in Z$.}
\end{align*}
This proves our claim.

Take $\delta$ as in \eqref{estinvimage}.  Then comparing the series $\sum e^{-\beta n}|h^{-n}(z)|$ with $\sum e^{-\delta n}$ shows that the series $\sum_{n=0}^\infty e^{-\beta n}|h^{-n}(z)|$ converges uniformly for $z\in Z$. Since $h$ is a local homeomorphism on a compact space, each $z\mapsto |h^{-1}(z)|$ is locally constant (by \cite[Lemma~2.2]{BRV}, for example), and hence continuous. Thus $f_\beta(z):=\sum_{n=0}^\infty e^{-\beta n}|h^{-n}(z)|$ is the uniform limit of a sequence of continuous functions, and is therefore continuous. To see \eqref{lowerbd}, we note that because all the series converge absolutely, we can interchange the order of sums in the following calculation: 
\begin{align*}
f_\beta(z)-\sum_{h(w)=z}&e^{-\beta}f_\beta(w)\\
&=\sum_{n=0}^\infty e^{-\beta n}|h^{-n}(z)|-\sum_{h(w)=z}e^{-\beta}\Big(\sum_{m=0}^\infty e^{-\beta m}|h^{-m}(w)|\Big)\\
&=\sum_{n=0}^\infty e^{-\beta n}|h^{-n}(z)|-\sum_{m=0}^\infty e^{-\beta (m+1)}\Big(\sum_{h(w)=z}|h^{-m}(w)|\Big)\\
&=\sum_{n=0}^\infty e^{-\beta n}|h^{-n}(z)|-\sum_{m=0}^\infty e^{-\beta (m+1)}|h^{-(m+1)}(z)|\\
&=e^{-\beta 0}|h^{-0}(z)|=1.
\end{align*}
We have now proved \eqref{fbeta}.

Next, we look at the series in \eqref{resolvent}. Take $\delta, K$ satisfying \eqref{estinvimage}. Then for $N>M\geq K$ and $g\in C(Z)$ we calculate using \eqref{Rn}:
\begin{align*}
\Big|\sum_{n=M+1}^N e^{-\beta n}\int g\,d(R^n\varepsilon)\Big|
&=\Big|\sum_{n=M+1}^N e^{-\beta n}\int \sum_{h^n(w)=z}g(w)\,d\varepsilon(z)\Big|\\
&\leq \sum_{n=M+1}^N e^{-\beta n} |h^{-n}(z)|\,\|\varepsilon\|_{C(Z)^*}\|g\|_\infty\\
&\leq \sum_{n=M+1}^N e^{-\delta n}\|\varepsilon\|_{C(Z)^*}\|g\|_\infty.
\end{align*}
Thus the series $\sum_{n=0}^\infty e^{-\beta n}R^n\varepsilon$ converges in the norm of $C(Z)^*$, as asserted in \eqref{resolvent}. Since the operation $R$ is affine and norm-continuous on positive measures, the sum $\mu:=\sum_{n=0}^\infty e^{-\beta n}R^n\varepsilon$ satisfies
\[
\mu-e^{-\beta}R\mu=\sum_{n=0}^\infty e^{-\beta n}R^n\varepsilon-\sum_{n=0}^\infty e^{-\beta (n+1)}R^{n+1}\varepsilon=\varepsilon;
\]
since $\varepsilon$ is a (positive) measure, this implies that $\mu$ satisfies the subinvariance relation. The Riesz representation theorem implies that $\mu$ is a regular Borel measure, and
\begin{align*}
\mu(Z)&=\sum_{n=0}^\infty e^{-\beta n}(R^n\varepsilon)(Z)=\sum_{n=0}^\infty e^{-\beta n}\int 1\,d(R^n\varepsilon)\\
&=\sum_{n=0}^\infty e^{-\beta n}\int |h^{-n}(z)|\,d\varepsilon(z),
\end{align*}
which by the monotone convergence theorem is $\int f_\beta\,d\varepsilon$. Thus $\mu$ is finite, and it is a probability measure if and only if $\int f_\beta\,d\varepsilon=1$.

For part \eqref{positivemass}, we first note that the subinvariance relation implies that $\varepsilon$ is a positive measure, and it is finite because $\mu$ is. Next we compute:
\begin{align*}
\int f_\beta\,d\varepsilon
&=\int f_\beta\,d\mu -e^{-\beta}\int f_\beta\,d(R\mu)\\
&=\int f_\beta(z)\,d\mu(z) -e^{-\beta}\int \sum_{h(w)=z}f_\beta(w)\,d\mu(z)\\
&=\int \Big(f_\beta(z)-\sum_{h(w)=z}e^{-\beta}f_\beta(w)\Big)\,d\mu(z),
\end{align*}
which by \eqref{lowerbd} is $\mu(Z)=1$. Finally, we have
\begin{align*}
\sum_{n=0}^\infty e^{-\beta n}R^n\varepsilon&=\sum_{n=0}^\infty e^{-\beta n}R^n(\mu-e^{-\beta}R\mu)\\
&=\sum_{n=0}^\infty e^{-\beta n}R^n\mu-\sum_{n=0}^\infty e^{-\beta (n+1)}R^{n+1}\mu=\mu.\qedhere
\end{align*}
\end{proof}

\section{KMS states on the Toeplitz algebra}\label{sec:Toeplitz}

Our main theorem is the following analogue of \cite[Theorem~3.1]{aHLRS}.

\begin{thm}\label{th1}
Suppose that $h:Z\to Z$ is a surjective local homeomorphism on a compact Hausdorff space $Z$, $E$ is the topological graph $(Z,Z,\id,h)$, and $X(E)$ is the graph correspondence. Define $\alpha:\R\to \Aut \TT (X(E))$ in terms of the gauge action by $\alpha_t=\gamma_{e^{it}}$. Take $\beta_c$ as in \eqref{defcrit}, suppose that $\beta>\beta_c$, and let $f_\beta$ be the function in Proposition~\ref{propsR}\,\eqref{fbeta}.
\begin{enumerate}
\item\label{3a}  Suppose that $\varepsilon$ is a finite regular Borel measure on $Z$ such that $\int f_\beta\,d\varepsilon=1$, and take $\mu=\sum_{n=0}^\infty e^{-\beta n}R^n\varepsilon$. Then there is a KMS$_\beta$ state $\phi_\varepsilon$ on $(\TT(X(E)),\alpha)$ such that
\begin{equation}\label{equ2}
\phi_{\varepsilon}\big(\psi^{\otimes l}(x)\psi^{\otimes m}(y)^*\big) =  \begin{cases}
0  & \text{if $l\neq m$}\\
e^{-\beta m}\int\langle y,x\rangle\,d\mu &  \text{if  $l=m$.}
\end{cases}
\end{equation}
\item\label{3b} The map $\varepsilon\mapsto \phi_\varepsilon$ is an affine isomorphism of 
\[\
\Sigma_\beta:=\Big\{\varepsilon \in M(Z)_+: \int f_\beta\,d\varepsilon=1\Big\}
\]
onto the simplex of $KMS_\beta$ states of $(\TT(X(E)),\alpha)$. The inverse takes a state $\phi$ to $\varepsilon:=\mu-e^{-\beta}R\mu$, where $\mu$ is the probability measure such that $\phi(\pi(a))=\int a\,d\mu$ for $a\in C(Z)$.
\end{enumerate}
\end{thm}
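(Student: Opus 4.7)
My plan is to use Proposition~\ref{KMScrit} as the principal tool throughout: it builds KMS states by verifying a formula, and (crucially for part~(b)) shows that a KMS$_\beta$ state is determined by its restriction to $\pi(A)$ via \eqref{commrel}. For part~(a) I will construct $\phi_\varepsilon$ spatially as a sum/integral of pure vector states; for part~(b) I will define the inverse map by $\phi\mapsto\mu-e^{-\beta}R\mu$ using Proposition~\ref{tosubinv} and Proposition~\ref{propsR}.

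For part~(a) I would proceed as follows. For each $z\in Z$, the point-evaluation $\mathrm{ev}_z$ on $C(Z)$ induces a representation $\pi_z:\TT(X(E))\to B(H_z)$ on $H_z:=\bigoplus_{m\geq 0}\ell^2(h^{-m}(z))$; concretely, for basis vectors $e_u$ indexed by $u\in h^{-m}(z)$ one checks that $\pi(a)e_u=a(u)e_u$, $\psi(x)e_u=\sum_{h(w)=u}x(w)e_w$, and $\psi(x)^*e_w=\overline{x(w)}e_{h(w)}$. I would then define
\[
\phi_\varepsilon(T):=\int_Z \sum_{m=0}^\infty\sum_{u\in h^{-m}(z)} e^{-\beta m}\langle\pi_z(T)e_u,e_u\rangle\,d\varepsilon(z).
\]
This is a manifestly positive combination of vector states, convergent because $\sum_m e^{-\beta m}|h^{-m}(z)|=f_\beta(z)$ is uniformly bounded and $\int f_\beta\,d\varepsilon=1$; the same computation gives $\phi_\varepsilon(1)=1$, so $\phi_\varepsilon$ is a state. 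A direct calculation with the formulas above shows that $\langle \pi_z(\psi^{\otimes l}(x)\psi^{\otimes m}(y)^*)e_u,e_u\rangle$ vanishes unless $l=m$ and $u$ lies in $h^{-p}(z)$ with $p\geq m$, in which case the value is $\overline{y(u)}x(u)$. Summing over $u$ and $p$ and integrating against $\varepsilon$, then reorganising using $R^m\mu=\sum_{k\geq 0}e^{-\beta k}R^{m+k}\varepsilon$, yields the formula \eqref{equ2}. In particular $\phi_\varepsilon\circ\pi(a)=\int a\,d\mu$; since $C(Z)$ is commutative this restriction is automatically tracial, so Proposition~\ref{KMScrit} gives that $\phi_\varepsilon$ is KMS$_\beta$.

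For part~(b) I would define $\Psi$ on a KMS$_\beta$ state $\phi$ as follows: let $\mu$ be the probability measure with $\phi\circ\pi=\int\cdot\,d\mu$, apply Proposition~\ref{tosubinv} to see that $\mu$ satisfies \eqref{0}, and apply Proposition~\ref{propsR}\eqref{positivemass} to conclude that $\varepsilon:=\mu-e^{-\beta}R\mu$ lies in $\Sigma_\beta$ with $\mu=\sum_n e^{-\beta n}R^n\varepsilon$; set $\Psi(\phi):=\varepsilon$. To see $\Psi\circ\Phi=\mathrm{id}_{\Sigma_\beta}$, note that by the calculation in part~(a) we have $\phi_\varepsilon\circ\pi=\int\cdot\,d\mu$, and then Proposition~\ref{propsR}\eqref{resolvent} recovers $\varepsilon$ from $\mu$. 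For $\Phi\circ\Psi=\mathrm{id}$, given $\phi$ with corresponding measure $\mu$, the states $\phi$ and $\phi_{\Psi(\phi)}$ are both KMS$_\beta$ with the same restriction $\tau_\mu$ to $\pi(A)$; Proposition~\ref{KMScrit} then forces them to agree on the dense $\ast$-subalgebra $\lsp\{\psi^{\otimes l}(x)\psi^{\otimes m}(y)^*\}$, hence on all of $\TT(X(E))$. The map $\varepsilon\mapsto\phi_\varepsilon$ is affine directly from the defining integral.

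The step I expect to be the main obstacle is verifying the $l=m$ case of \eqref{equ2} for the spatial model: one must carefully reindex the double sum $\sum_{p\geq m}e^{-\beta p}\sum_{u\in h^{-p}(z)}\overline{y(u)}x(u)$ (summing contributions from vectors $e_u$ of depth $p\geq m$) and match it with $e^{-\beta m}\int\langle y,x\rangle_m\,d\mu$ via the expansion of $\mu$ as a geometric series in $R$. The off-diagonal vanishing, convergence of the defining integral, positivity, normalisation, and the application of Proposition~\ref{KMScrit} are then either immediate or follow directly from the preparatory results already established in the paper.
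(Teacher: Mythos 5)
Your argument is correct and follows essentially the same route as the paper: a spatial Fock-type construction of $\phi_\varepsilon$ whose values on the spanning elements $\psi^{\otimes l}(x)\psi^{\otimes m}(y)^*$ are computed and checked against Proposition~\ref{KMScrit}, with part~(b) obtained from Propositions~\ref{tosubinv} and~\ref{propsR} exactly as in the text. The only difference is one of packaging --- you integrate diagonal matrix coefficients of fibrewise representations $\pi_z$ on $\bigoplus_m\ell^2(h^{-m}(z))$ against $\varepsilon$, whereas the paper uses a single representation on $\bigoplus_n L^2(Z,R^n\varepsilon)$ with vectors built from partitions trivialising $h^k$ --- and your version just needs a routine remark that $z\mapsto\sum_{m,u}e^{-\beta m}\langle\pi_z(T)e_u,e_u\rangle$ is measurable for general $T$ (continuity on the spanning set plus the uniform bound $\|f_\beta\|_\infty$ suffices).
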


In the proof of this theorem, we will need to do some computations in the Toeplitz algebra, and the following observation will help.

\begin{lemma}\label{idtps}
For $n\geq 1$ we consider the topological graph $F_n=(Z,Z,\id,h^n)$. Then there is an isomorphism $\rho_n$ of $X(E)^{\otimes n}$ onto $X(F_n)$ such that
\[
\rho_n(x_1\otimes x_2\otimes\cdots\otimes x_n)(z)=x_1(z)x_2(h(z))\cdots x_n(h^{n-1}(z)).
\]  
\end{lemma}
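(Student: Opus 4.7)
The approach is an induction on $n$. The base case $n=1$ is trivial: $F_1=E$ and $\rho_1$ is the identity on $X(E)$. For the inductive step, I would factor
\[
\rho_n = \sigma_n \circ (\id_{X(E)} \otimes \rho_{n-1}),
\]
where $\sigma_n : X(E) \otimes_A X(F_{n-1}) \to X(F_n)$ is the map given on elementary tensors by $\sigma_n(x\otimes y)(z) = x(z)\,y(h(z))$. Unwinding this composition reproduces the formula stated in the lemma, and by functoriality of the internal tensor product (applied to the Hilbert bimodule isomorphism $\rho_{n-1}$ from the inductive hypothesis), it suffices to prove that $\sigma_n$ is an isomorphism of Hilbert bimodules.

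All verifications for $\sigma_n$ reduce to short pointwise computations using the formulas recalled in the preamble (left action via $r=\id$; right action and inner product via the source map, which is $h$ for $E$ and $h^n$ for $F_n$). First, $\sigma_n$ descends to the $A$-balanced tensor product because for $a\in C(Z)$ both $\sigma_n((x\cdot a)\otimes y)(z)$ and $\sigma_n(x\otimes(a\cdot y))(z)$ equal $x(z)\,a(h(z))\,y(h(z))$. The left $C(Z)$-action is preserved since $(a\cdot x)(z)=a(z)x(z)$, and the right action is preserved because $h^n = h^{n-1}\circ h$. Surjectivity is immediate from $\sigma_n(f\otimes 1_Z)=f$ for $f\in C(Z)=X(F_n)$.

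The only step that needs a little care is preservation of the inner product, and the main obstacle is really just keeping the indices straight in the relevant reindexing. Expanding the $A$-valued inner product on $X(E)\otimes_A X(F_{n-1})$ gives
\begin{align*}
\langle x\otimes y,\, x'\otimes y'\rangle(z)
&= \langle y,\ \langle x,x'\rangle\cdot y'\rangle_{X(F_{n-1})}(z) \\
&= \sum_{h^{n-1}(w)=z}\ \sum_{h(v)=w} \overline{x(v)\,y(w)}\, x'(v)\,y'(w).
\end{align*}
Reindexing the double sum via $w=h(v)$ collapses the condition $\{h(v)=w,\ h^{n-1}(w)=z\}$ to the single condition $h^n(v)=z$, yielding $\sum_{h^n(v)=z}\overline{x(v)\,y(h(v))}\,x'(v)\,y'(h(v))$, which is exactly $\langle \sigma_n(x\otimes y),\,\sigma_n(x'\otimes y')\rangle_{X(F_n)}(z)$. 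Hence $\sigma_n$ is isometric, thus injective, and together with surjectivity this gives the desired Hilbert bimodule isomorphism, completing the induction.
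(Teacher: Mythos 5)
Your proposal is correct and follows essentially the same route as the paper: an induction in which the map from $X(E)\otimes_A X(E)^{\otimes(n-1)}$ is given by $x\otimes y\mapsto\bigl(z\mapsto x(z)\,\rho_{n-1}(y)(h(z))\bigr)$, with preservation of the inner product verified by the same reindexing of the double sum over $\{h(v)=w,\ h^{n-1}(w)=z\}$ as a single sum over $h^{n}(v)=z$, and surjectivity obtained from the fact that the range contains $C(Z)=X(F_n)$. The only cosmetic difference is that you isolate the outer map $\sigma_n$ and invoke functoriality of the internal tensor product explicitly, whereas the paper absorbs this into the definition of $\rho_{n+1}$.
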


\begin{proof}
We prove this by induction on $n$. It is trivially true for $n=1$ --- indeed, we have $E=F_1$, and $\rho_1$ is the identity. Suppose that there is such an isomorphism $\rho_n$ and define $\rho_{n+1}(x_1\otimes x)(z)=x_1(z)\rho_n(x)(h(z))$. Routine calculations show that $\rho_{n+1}$ is a bimodule homomorphism. We next show that $\rho_{n+1}$ preserves the inner products. Let $x_1\otimes x$ and $y_1\otimes y$ be elementary tensors in $X(E)\otimes_{C(Z)} X(E)^{\otimes n}$. Then for $z\in Z$ we have
\begin{align*}
\big\langle \rho_{n+1}(x_1\otimes x),\,&\rho_{n+1}(y_1\otimes y)\big\rangle(z)
=\sum_{h^{n+1}(w)=z}\overline{x_1(w)\rho_n(x)(h(w))}y_1(w)\rho_n(y)(h(w))\\
&=\sum_{h^{n}(v)=z}\sum_{h(w)=v}\overline{x_1(w)\rho_n(x)(h(w))}y_1(w)\rho_n(y)(h(w))\\
&=\sum_{h^{n}(v)=z}\overline{\rho_n(x)(v)}\Big(\sum_{h(w)=v}\overline{x_1(w)}y_1(w)\Big)\rho_n(y)(v)\\
&=\sum_{h^{n}(v)=z}\overline{\rho_n(x)(v)}\big(\langle x_1,y_1\rangle\cdot\rho_n(y))(v)\\
&=\big\langle \rho_n(x),\langle x_1,y_1\rangle\cdot\rho_n(y)\big\rangle(z)\\
&=\big\langle x_1\otimes\rho_n(x),y_1\otimes\rho_n(y)\big\rangle(z).
\end{align*}
Since the range of $\rho_{n+1}$ contains $C(Z)$ (take $x=1$), we deduce that $\rho_{n+1}$ is an isomorphism of Hilbert bimodules.
\end{proof}

\begin{proof}[Proof of Theorem~\ref{th1}]
We aim to construct the KMS state $\phi_\varepsilon$ using a representation\footnote{As in our previous papers, this construction was motivated by the one in the proof of \cite[Theorem~1.4]{LN}, which suggests that we should take a representation, here the representation $M_\varepsilon$ of $A=C(Z)$ by multiplication operators on $L^2(Z,\varepsilon)$, and work in the induced representation $F(X(E))\dashind_A^{\TT} M_\varepsilon$ of $\TT=\TT(X(E))$, where $F(X(E))$ is the Fock bimodule. However, this requires many identifications, and it seems clearer to write down a concrete Hilbert space.} $(\theta,\rho)$ of $X(E)$ on $H_{\theta,\rho}:=\bigoplus_{n=0}^\infty L^2(Z,R^n\varepsilon)$. We write elements of the direct sum as sequences $\xi=(\xi_n)$. For $a\in C(Z)$, we take $\rho$ to be the direct sum of the representations $\rho_n$ of $C(Z)$ on $L^2(Z,R^n\varepsilon)$ given by $(\rho_n(a)\xi_n)(z)=a(z)\xi_n(z)$. Next we claim that for each $x\in X$ there is a bounded operator $\theta(x)$ on $H_{\theta,\rho}$ such that
\[
(\theta(x)\xi)_{n+1}(z)=x(z)\xi_n(h(z))\text{ for $n\geq 0$ and } (\theta(x)\xi)_{0}=0.
\]
To justify the claim, we take  $\xi=(\xi_n)\in\bigoplus_{n=0} ^ \infty  L^2(Z, R^n\varepsilon)$ and compute:
\begin{align*}
\|\theta(x)\xi\|^2&=\sum_{n=0}^\infty \|(\theta(x)\xi_n)_{n+1}\|^2\\
&= \sum_{n=0}^\infty \int |x(z)|^2|\xi_n(h(z))|^2\,d(R^{n+1}\varepsilon)(z)\\
&\leq\sum_{n=0}^\infty \|x\|_\infty^2\int \sum_{h(w)=z}|\xi_n(h(w))|^2\,d(R^{n}\varepsilon)(z)\\
&=\sum_{n=0}^\infty \|x\|_\infty^2\int \sum_{h(w)=z}|\xi_n(z)|^2\,d(R^{n}\varepsilon)(z)\\
&\leq\sum_{n=0}^\infty \|x\|_\infty^2 c_1 \int |\xi_n(z)|^2\,d(R^{n}\varepsilon)(z)\quad\text{(where $c_1=\textstyle{\max_z|h^{-1}(z)|}$)}  \\
&=c_1\|x\|_\infty^2 \|\xi\|^2.
\end{align*}
A similar calculation shows that the adjoint $\theta(x)^*$ satisfies
\begin{equation}\label{equ44}
(\theta(x)^*\eta)_n(z)=\sum_{h(w)=z}\overline{x(w)}\eta_{n+1}(w)\quad \text{for $\eta\in H_{\theta,\rho}$.}
\end{equation}

Next we claim that $(\theta,\rho)$ is a representation of $X(E)$. It is easy to check that $\theta(a\cdot x)=\rho(a)\theta(x)$, and almost as easy to see that $\theta(x\cdot a)=\theta(x)\rho(a)$: for $\xi=(\xi_n)$ we have $(\theta(x\cdot a)\xi)_{0}=0=(\theta(x)(\rho(a)\xi))_0$, and for $n\geq 1$
\begin{align*}
(\theta(x\cdot a)\xi)_{n}(z)&=(x(z)a(h(z)))\xi_{n-1}(h(z))=x(z)(\rho(a)\xi)_{n-1}(h(z))\\
&=\big(\theta(x)(\rho(a)\xi)\big)_{n}(z).
\end{align*}
For $n\geq 0$, we have
\begin{align*}
(\rho(\langle x,y\rangle)\xi)_n(z)&=\langle x,y\rangle(z) \xi_n(z)=\sum_{h(w)=z}\overline{x(w)}y(w)\xi_n(z)\\
&=\sum_{h(w)=z}\overline{x(w)}y(w)\xi_n(h(w))=\sum_{h(w)=z}\overline{x(w)}(\theta(y)\xi)_{n+1}(w)\\
&=(\theta(x)^*\theta(y)\xi)_n(z)\quad\text{(using \eqref{equ44}).}
\end{align*}
Now the universal property of $\TT(X(E))$ gives a homomorphism $\theta\times \rho:\TT(X(E))\to B(H_{\theta,\rho})$ such that $(\theta\times \rho)\circ\psi=\theta$ and $(\theta\times \rho)\circ\pi=\rho$.

For each $k\geq 1$ we choose a finite partition $\{Z_{k,i}: 1\leq i\leq I_k\}$ of $Z$ by Borel sets such that $h^k$ is one-to-one on each $Z_{k,i}$. We write also $I_0=1$ and $Z_{0,1}=Z$. Let $\chi_{k,i}=\chi_{Z_{k,i}}$, and define $\xi^{k,i}\in \bigoplus_{n=0}^\infty L^2(Z,R^n\varepsilon)$ by
\begin{equation*}
\xi_n^{k,i}= \begin{cases}
0  & \text{if $n\neq k$}\\
\chi_{k,i}& \text{if $n=k$.}
\end{cases}
\end{equation*}
We aim to define our state $\phi_{\varepsilon}:\TT(X(E))\to \C$ by
\begin{equation}\label{defphi}
\phi_{\varepsilon}(b)=\sum_{k=0}^\infty\sum_{i=1}^{I_k} e^{-\beta k}\big(\theta\times\rho(b)\xi^{k,i}\,|\,\xi^{k,i}\big)\quad\text{for  $b\in \TT(X(E))$,}
\end{equation}
but of course we have to show that the series converges. It suffices to do this for positive $b$, and then since $b\leq \|b\|1$ it suffices to prove that the series for $\phi_{\varepsilon}(1)$ converges. Since for each $k$ the $Z_{k,i}$ partition $Z$, we have
\[
\sum_{k=0}^\infty\sum_{i=1}^{I_k}e^{-\beta k}\big(\chi_{Z_{k,i}}\,|\,\chi_{Z_{k,i}}\big)
=\sum_{k=0}^\infty\sum_{i=1}^{I_k}e^{-\beta k}R^k\varepsilon(Z_{k,i})
=\sum_{k=0}^\infty e^{-\beta k} R^k\varepsilon(Z);
\]
Proposition~\ref{propsR} implies that this converges with sum $\mu(Z)=1$. Thus the formula \eqref{defphi} gives us a well-defined state on $\TT(X(E))$. 

We now prove that this state satisfies \eqref{equ2}. 
So we take $x\in X(F_l)=X^{\otimes l}$,  $y\in X(F_m)=X^{\otimes m}$ and $b=\psi^{\otimes l}(x)\psi^{\otimes m}(y)^*$. Since $\xi^{k,i}$ is zero in all except the $k$th summand of $\bigoplus_{n=0}^\infty L^2(Z,R^n\varepsilon)$,
\[
\theta\times\rho(b)\xi^{k,i}=\theta^{\otimes l}(x)\theta^{\otimes m}(y)^*\xi^{k,i}
\]
is zero in all but the $(k-m+l)$th summand. Thus
\[
(\theta\times\rho(b)\xi^{k,i}\,|\,\xi^{k,i})=0\quad \text{for all $k,i$ whenever $l\not=m$,}
\]
and $\phi_{\varepsilon}$ certainly satisfies \eqref{equ2} when $l\not=m$. So we suppose that $l=m\geq 0$. 

Next, note that $\theta^{\otimes m}(x)\theta^{\otimes m}(y)^*\xi^{k,i}=0$ if $k<m$. For $k\geq m$, we know that $h^k$ is injective on $Z_{k,i}$, and hence so is $h^m$. Thus $w,z\in Z_{k,i}$ and $h^m(w)=h^m(z)$  imply $w=z$, and
\begin{align*}
\big(\theta^{\otimes m}(x)\theta^{\otimes m}(y)^*\xi^{k,i}\,|\,\xi^{k,i}\big)
&=\int \Big(x(z)\sum_{h^m(w)=h^m(z)}\overline{y(w)}\chi_{k,i}(w)\Big)\overline{\chi_{k,i}(z)}\,d(R^k\varepsilon)(z)\\
&=\int x(z)\overline{y(z)}\overline{\chi_{k,i}(z)}\,d(R^k\varepsilon)(z).
\end{align*}
Since the $Z_{k,i}$ partition $Z$, summing over $i$ gives
\[
\sum_{i=1}^{I_k} \big(\theta\times\rho(\psi^{\otimes m}(x)\psi^{\otimes m}(y)^*)\xi^{k,i}\,|\,\xi^{k,i}\big)
=\int x(z)\overline{y(z)}\,d(R^k\varepsilon)(z).
\]
 Thus from \eqref{Rn} and the formula for the inner product on $X(E)^{\otimes m}=X(F_m)$ we have
\begin{align}\label{CalcKMS}
\phi_{\varepsilon}\big(\psi^{\otimes m}&(x)\psi^{\otimes m}(y)^*\big)=\sum_{k=m}^\infty e^{-\beta k}\int x(z)\overline{y(z)}\,d(R^k\varepsilon)(z)\\
\notag&=\sum_{k=m}^\infty  e^{-\beta k}\int\sum_{h^m(w)=z} x(w)\overline{y(w)}\,d(R^{k-m}\varepsilon)(z)\\
\notag&=\sum_{k=0}^\infty  e^{-\beta (m+k)}\int\langle y, x\rangle(z)\,d(R^{k}\varepsilon)(z)\\
\notag&=e^{-\beta m}\int \langle y, x\rangle\,d\Big(\sum_{k=0}^\infty  e^{-\beta k}R^{k}\varepsilon\Big)\quad \text{by Proposition~\ref{propsR}\eqref{resolvent}}\\
\notag&=e^{-\beta m}\int\langle y, x\rangle \,d\mu.
\end{align}
This is \eqref{equ2}. Applying \eqref{equ2} with $m=0$ shows that $\phi_{\varepsilon}(\pi(a))=\int a\,d\mu$, which says that the last integral in \eqref{CalcKMS} is $\phi_{\varepsilon}\circ\pi (\langle y,x\rangle)$. Thus $\phi_{\varepsilon}$ satisfies \eqref{commrel}, and Proposition~\ref{KMScrit} implies that $\phi_{\varepsilon}$ is a KMS$_\beta$ state. We have now proved part~\eqref{3a}.

Now suppose that $\phi$ is a KMS$_\beta$ state, and let $\mu$ be the probability measure such that $\phi\circ\pi(a)=\int a\,d\mu$ for $a\in C(Z)$. Then Proposition~\ref{tosubinv} implies that $\mu$ satisfies the subinvariance relation $R\mu\leq e^{\beta}\mu$, and hence Proposition~\ref{propsR}\eqref{positivemass} implies that $\varepsilon:=\mu-e^{-\beta}R\mu$ is a positive measure which belongs to $\Sigma_\beta$ and satisfies $(1-e^{-\beta}R)^{-1}\varepsilon=\mu$. Thus formulas~\eqref{commrel} and~\eqref{equ2} imply that $\phi=\phi_\varepsilon$. This shows that $\varepsilon\mapsto \phi_\epsilon$ is surjective. Since applying the construction of this paragraph to the state $\phi_{\varepsilon}$ gives us $\epsilon=\mu-e^{-\beta} R\mu$ back, it also shows that $\varepsilon\mapsto \phi_{\varepsilon}$ is one-to-one.

Thus $\varepsilon\mapsto \phi_\varepsilon$ maps $\Sigma_\beta$ onto the set of KMS$_\beta$ states, and it is affine and continuous for the respective weak* topologies. So we have proved our theorem.
\end{proof}

The next Corollary is contained in \cite[Theorem~6.8]{Th1} (here the function $F$ of that theorem is identically $1$ --- see Remark~\ref{compwithT} below), but the proof in \cite{Th1} is quite different.

\begin{cor}\label{crittemp}
Take $h:Z\to Z$ and $E$ as in Theorem~\ref{th1}, and define $\alpha:\R\to \Aut \OO(X(E))$ in terms of the gauge action $\gamma$ by $\alpha_t=\gamma_{e^{it}}$. If there is a KMS state of $(\OO(X(E)),\alpha)$ with inverse temperature $\beta$, then $\beta\leq \beta_c$.
\end{cor}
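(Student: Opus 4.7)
The plan is to prove the contrapositive: assume $\beta>\beta_c$ and derive a contradiction from the existence of a KMS$_\beta$ state on $(\OO(X(E)),\alpha)$. Any such state $\phi$ pulls back along the quotient map $q:\TT(X(E))\to \OO(X(E))$ to a state $\tilde\phi:=\phi\circ q$ on $\TT(X(E))$, and since the gauge action on $\TT(X(E))$ descends to the gauge action on $\OO(X(E))$, the pulled-back state $\tilde\phi$ is a KMS$_\beta$ state of $(\TT(X(E)),\alpha)$. Let $\mu$ be the probability measure with $\tilde\phi(\pi(a))=\int a\,d\mu$. By Proposition~\ref{tosubinv}, $\mu$ satisfies the subinvariance relation; the goal is to upgrade this to the \emph{invariance relation} $R\mu=e^\beta\mu$ and then invoke the observation in the Remark to conclude.

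The key step is to revisit the inequality \eqref{ineqinTT} inside $\OO(X(E))$. In the proof of Proposition~\ref{tosubinv}, the partition-of-unity argument shows that $\sum_{i=0}^k \Theta_{a\cdot\xi_i,\xi_i}=\varphi(a)$ as elements of $\KK(X(E))$ (the only contributions to $\xi_i(z)\langle\xi_i,x\rangle(h(z))$ come from $w=z$ because $h|_{U_i}$ is injective). Consequently, in $\TT(X(E))$ we have the identity
\[
\sum_{i=0}^k \psi(a\cdot\xi_i)\psi(\xi_i)^* = (\psi,\pi)^{(1)}(\varphi(a)).
\]
Because $\varphi(a)\in\KK(X(E))$ for every $a\in C(Z)$ (this holds for topological graphs of the present sort, as noted in \S2.1), the defining Cuntz--Pimsner relation gives
\[
q\bigl((\psi,\pi)^{(1)}(\varphi(a))\bigr)=q(\pi(a))\quad\text{in $\OO(X(E))$.}
\]
Applying $\phi$ to this equation and using formula \eqref{equ40} (which was derived purely from the KMS property and is therefore valid for $\tilde\phi$) yields, for every positive $a\in C(Z)$,
\[
\int a\,d\mu=\tilde\phi(\pi(a))=\tilde\phi\Bigl(\sum_{i=0}^k\psi(a\cdot\xi_i)\psi(\xi_i)^*\Bigr)=e^{-\beta}\int a\,d(R\mu).
\]
Hence $R\mu=e^\beta\mu$.

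Finally, I apply Proposition~\ref{propsR}\eqref{positivemass} to the probability measure $\mu$: since $\mu$ certainly satisfies the subinvariance relation, the measure $\varepsilon:=\mu-e^{-\beta}R\mu$ must be a positive Borel measure with $\int f_\beta\,d\varepsilon=1$. But the equality $R\mu=e^\beta\mu$ just established forces $\varepsilon=0$, whence $\int f_\beta\,d\varepsilon=0\neq 1$, a contradiction. The only subtle point in executing the plan is the first step—verifying that $\varphi(a)\in\KK(X(E))$ and identifying the finite sum $\sum\psi(a\cdot\xi_i)\psi(\xi_i)^*$ with $(\psi,\pi)^{(1)}(\varphi(a))$ exactly; once this identification is in hand, the inequality from the Toeplitz setting becomes an equality after passing to the Cuntz--Pimsner quotient, and the rest is a direct appeal to the machinery already developed.
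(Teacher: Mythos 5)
Your argument is correct, and it follows a different route from the paper's printed proof of the corollary --- though it is precisely the route foreshadowed in the footnote to the Remark after Proposition~\ref{propsR}. You upgrade the subinvariance relation to the full \emph{invariance} relation $R\mu=e^{\beta}\mu$ by observing that the partition-of-unity computation in Proposition~\ref{tosubinv} actually gives the exact identity $\varphi(a)=\sum_i\Theta_{a\cdot\xi_i,\xi_i}$ in $\KK(X(E))$ (so the inequality \eqref{ineqinTT} becomes an equality after applying $q$), and then you feed $\varepsilon=\mu-e^{-\beta}R\mu=0$ into Proposition~\ref{propsR}\eqref{positivemass} to get $0=\int f_\beta\,d\varepsilon=1$. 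The paper instead invokes Theorem~\ref{th1}\eqref{3b} to write $\phi\circ q=\phi_\varepsilon$ with $\varepsilon(Z)>0$, picks a single function $f$ supported in a set where $h$ is injective and with $\int|f|^2\,d\varepsilon>0$, and checks that $\phi_\varepsilon$ fails to vanish on the element $\pi(|f|^2)-\psi(f)\psi(f)^*$ of $\ker q$. The two proofs exploit the same tension --- the Cuntz--Pimsner relation forces $\varepsilon$ to vanish while Proposition~\ref{propsR} forces it to have positive total $f_\beta$-integral --- but yours establishes the stronger global statement that KMS$_\beta$ states of $(\OO(X(E)),\alpha)$ give invariant measures (the Cuntz--Pimsner analogue of Proposition~\ref{tosubinv}), which is of independent interest, whereas the paper's is more economical in that it only needs to test one generator of $\ker q$ against the state. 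One small remark: your identity $\sum_i\psi(a\cdot\xi_i)\psi(\xi_i)^*=(\psi,\pi)^{(1)}(\varphi(a))$ is exactly the observation the paper makes later, in the proof of Theorem~\ref{existbetac} (see the display \eqref{genkerq}), so nothing in your first step is outside the paper's toolkit.
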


\begin{proof}
Suppose $\beta>\beta_c$ and there is a KMS$_\beta$ state $\phi$ of $(\OO(X(E)),\alpha)$. Denote by $q$ the quotient map of $\TT(X(E))$ onto $\OO(X(E))$. Then $\phi\circ q$ is a KMS$_\beta$ state of the system $(\TT(X(E)),\alpha)$ considered in Theorem~\ref{th1}. Thus there is a measure $\varepsilon$ on $Z$ such that $\int f_\beta\,d\varepsilon=1$ and $\phi\circ q=\phi_\varepsilon$. Notice in particular that $\varepsilon(Z)>0$. We can find a finite open cover $\{U_j:1\leq j\leq I\}$ of $Z$ by sets such that $h|_{U_j}$ is a homeomorphism, and we can find open sets $\{V_j:1\leq j\leq I\}$ which still cover $Z$ but have $\overline{V_j}\subset U_j$ (see \cite[Lemma~4.32]{tfb}, for example). Since $\varepsilon(Z)>0$, there exists $j$ such that $\varepsilon(V_j)>0$. Now choose a function $f\in C_c(Z)$ such that $f(z)\not=0$ for $z\in V_j$ and $\supp f\subset U_j$. Then the left action of $|f|^2\in C(Z)$ on $X(E)$ is implemented by the finite-rank operator $\Theta_{f,f}$, and hence 
\begin{align*}
\pi(|f|^2)-\psi(f)\psi(f)^*&=\pi(|f|^2)-(\psi,\pi)^{(1)}(\Theta_{f,f})\\
&=\pi(|f|^2)-(\psi,\pi)^{(1)}(\varphi(|f|^2))
\end{align*}
belongs to the kernel of the quotient map $q$. But with $\mu$ as in Theorem~\ref{th1}\eqref{3b}, we have
\begin{align*}
\phi_\varepsilon(\pi(|f|^2)-\psi(f)\psi(f)^*)
&=\int |f|^2\,d\mu-e^{-\beta}\int \sum_{h(w)=z}|f|^2(w)\,d\mu\\
&=\int |f|^2\,d(\mu-e^{-\beta}R\mu)=\int |f|^2\,d\varepsilon>0.
\end{align*}
Thus $\phi_\varepsilon$ does not vanish on $\ker q$, and we have a contradiction. Thus $\beta\leq \beta_c$.
\end{proof}

\begin{example}
Suppose that $A\in M_d(\Z)$ is an integer matrix with $N:=|\det A|>1$. Then there is a covering map $\sigma_A:\T^d\to \T^d$ such that $\sigma_A(e^{2\pi ix})=e^{2\pi iAx}$ for $x\in \R^d$. The inverse image of each $z\in \T^d$ has $N$ elements, and hence $|\sigma_A^{-n}(z)|=N^n$ for all $z$. Thus
\[
\frac{1}{n}\ln\Big(\max_{z\in \T^d}|\sigma_A^{-n}(z)|\Big)=\frac{1}{n}\ln N^n=\ln N\quad\text{for all $n$,}
\]
and $\beta_c=\ln N$. Suppose $\beta>\ln N$ and $\nu$ is a probability measure on $\T^d$. The function $f_\beta$ is the constant function
\[
f_\beta\equiv \sum_{n=0}^\infty e^{-\beta n}N^n=\frac{1}{1-Ne^{-\beta}},
\]
and hence the measure $\varepsilon:=(1-Ne^{-\beta})\nu$ satisfies $\int f_\beta\,d\varepsilon=1$. Thus with $E=(\T^d,\T^d,\id,\sigma_A)$, Theorem~\ref{th1} gives a KMS$_\beta$ state $\phi_\varepsilon$ on $(\TT(X(E)),\alpha)$ such that
\begin{equation}\label{KMSdilation}
\phi_{\varepsilon}\big(\psi^{\otimes k}(x)\psi^{\otimes l}(y)^*\big) = \delta_{k,l}
e^{-\beta k}\sum_{j=0}^\infty e^{-\beta j}\int\langle y,x\rangle\,d(R^j\varepsilon)
\end{equation}
for $x\in X^{\otimes k}$, $y\in X^{\otimes l}$. We claim that $\phi_{\varepsilon}$ is the KMS state $\psi_{\beta,\nu}$ described in \cite[Proposition~6.1]{LRR}.

The algebra $\TT(M_L)$ in \cite{LRR} is associated to an Exel system $(C(\T^d),\sigma_A^*,L)$, in which $\sigma_A^*$ is the endomorphism $f\mapsto f\circ \sigma_A$ and $L$ is a ``transfer operator'' defined by $L(f)(z)=N^{-1}\sum_{\sigma_A(w)=z}f(w)$. The bimodule $M_L$ is a copy of $C(\T^d)$ with operations $a\cdot m\cdot b=am\sigma_A^*(b)$ and inner product $\langle m,n\rangle=L(m^*n)$. The map $m\mapsto N^{-1/2}m$ is an isomorphism of $M_L$ onto $X(E)$, and this isomorphism induces isomorphisms of $\TT(M_L)$ onto $\TT(X(E))$ and of the system $(\TT(M_L),\sigma)$ in \cite{LRR} onto our $(\TT(X(E)),\alpha)$. In the presentation of $\TT(M_L)$ used in \cite{LRR}, we need to consider elements $\{u_mv^k:m\in \Z^d,\;k\in \N\}$; such an element $u_mv^k$ lies in $\psi^{\otimes k}(M_L^{\otimes k})$. The isomorphism of $M_L^{\otimes k}$ onto $X(E)^{\otimes k}=X(F_k)$ takes $u_mv^k$ to the function $N^{-k/2}\gamma_m:z\mapsto N^{-k/2}z^m$, and the inner product on $X(F_k)$ is given in terms of $L$ by $\langle y,x\rangle=N^kL^k(\overline y x)$. For $a\in C(\T^d)$, we have
\[
\int a\,d(R^j\varepsilon)=\int\sum_{\sigma_A^j(w)=z}a(w)\,d\varepsilon(z)=\int N^jL^j(a)(z)\,d\varepsilon(z).
\]
Putting this into \eqref{KMSdilation} gives
\begin{align*}
\phi_\varepsilon(u_mv^kv^{*l}u^{*}_n)
&=\delta_{k,l}
e^{-\beta k}\sum_{j=0}^\infty e^{-\beta j}\int N^jL^j\big(N^kL^k\big(\overline{N^{-k/2}\gamma_n}N^{-k/2}\gamma_m\big)\big)\,d\varepsilon\\
&=\delta_{k,l}
\sum_{j=k}^\infty e^{-\beta j}N^{j-k}\int L^j(\gamma_{m-n})\,d\varepsilon.
\end{align*}
The calculation in the third paragraph of the proof of \cite[Proposition~3.1]{LRR} (applied to $A^j$ rather than $A$), shows that with $B:=A^t$ we have
\[
L^j(\gamma_{m-n})=\begin{cases}
0&\text{unless $m-n\in B^j\Z^d$}\\
\gamma_{B^{-j}(m-n)}&\text{if $m-n\in B^j\Z^d$.}\end{cases}
\]
Thus
\begin{align*}
\phi_\varepsilon(u_mv^k&v^{*l}u^{*}_n)
=\delta_{k,l}
\sum_{\{j\geq k\,:\,m-n\in B^j\Z^d\}} e^{-\beta j}N^{j-k}\int \gamma_{B^{-j}(m-n)}\,d\varepsilon\\
&=\delta_{k,l}
\sum_{\{j\geq k\,:\,m-n\in B^j\Z^d\}} e^{-\beta j}N^{j-k}\int z^{B^{-j}(m-n)}(1-Ne^{-\beta})\,d\nu(z).
\end{align*}
Thus $\phi_\varepsilon$ is the state $\psi_{\beta,\nu}$ described in \cite[Proposition~6.1]{LRR}, as claimed.
\end{example}

\section{KMS states at the critical inverse temperature}\label{sec:crit}

\begin{thm}\label{existbetac}
Suppose that $h:Z\to Z$ is a surjective local homeomorphism on a compact Hausdorff space $Z$, $E$ is the topological graph $(Z,Z,\id,h)$, and $X(E)$ is the graph correspondence. Define $\alpha:\R\to \Aut \TT (X(E))$ and $\bar\alpha:\R\to \Aut \OO(X(E))$ in terms of the gauge actions by $\alpha_t=\gamma_{e^{it}}$ and $\bar\alpha_t=\bar\gamma_{e^{it}}$. Take $\beta_c$ as in \eqref{defcrit}. Then there exists a KMS$_{\beta_c}$ state on $(\TT(X(E)),\alpha)$, and at least one such state factors through a KMS$_{\beta_c}$ state of $(\OO(X(E),\bar\alpha)$.
\end{thm}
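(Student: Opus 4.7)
Both conclusions will be produced as weak*-limits of KMS$_\beta$ states on $\TT(X(E))$ as $\beta\searrow\beta_c$.

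Pick a sequence $\beta_n\searrow\beta_c$, and for each $n$ a KMS$_{\beta_n}$ state $\phi_n$ on $\TT(X(E))$ supplied by Theorem~\ref{th1}. Weak*-compactness of the state space of the unital algebra $\TT(X(E))$ yields a subnet converging weak* to some state $\phi$. The characterisation of Proposition~\ref{KMScrit} is preserved under pointwise limits: $\phi\circ\pi$ is a trace (pointwise limit of traces), and the identity $\phi_n(\psi^{\otimes l}(x)\psi^{\otimes m}(y)^*) = \delta_{l,m}\,e^{-\beta_n m}\,\phi_n\circ\pi(\langle y,x\rangle)$ passes to the same identity with $\beta_c$ in place of $\beta_n$. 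Hence $\phi$ is a KMS$_{\beta_c}$ state of $(\TT(X(E)),\alpha)$, establishing the first assertion.

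For the second assertion I would choose the $\phi_n$ so that the limit $\phi$ also vanishes on $\ker q$. Writing $\phi_n = \phi_{\varepsilon_n}$ with $\varepsilon_n\in\Sigma_{\beta_n}$ via Theorem~\ref{th1}, the computation performed in the proof of Corollary~\ref{crittemp} gives $\phi_n\bigl(\pi(a)-(\psi,\pi)^{(1)}(\varphi(a))\bigr) = \int a\,d\varepsilon_n$ for every $a\in C(Z)$. Thus it suffices to arrange $\varepsilon_n\to 0$ in norm, which I propose to do by taking $\varepsilon_n := \delta_{z_n}/f_{\beta_n}(z_n)$ at points $z_n\in Z$ with $f_{\beta_n}(z_n)\to\infty$; then $\|\varepsilon_n\|\to 0$ and the limit state $\phi$ vanishes on every generator $s_a := \pi(a)-(\psi,\pi)^{(1)}(\varphi(a))$ of $\ker q$.

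To upgrade vanishing on these generators to vanishing on the whole ideal, I exploit the KMS property. Each $s_a$ is gauge-invariant (both summands have gauge degree $0$), $\phi$ restricts to a tracial state on the gauge-invariant subalgebra $\TT(X(E))^\gamma$ (whose elements are $\alpha_{i\beta_c}$-fixed), and $\phi = \phi\circ E$ for the conditional expectation $E$ onto $\TT(X(E))^\gamma$ obtained by averaging the circle action. For $b,c$ in the usual spanning family, decomposing into gauge-homogeneous components gives $\phi(bs_ac) = \phi(E(bs_ac)) = \sum_k \phi((c_{-k}b_k)s_a)$ via the trace property, and each summand vanishes by a standard trace Cauchy--Schwarz argument: $\phi(Xs_a) = 0$ for $X\in\TT(X(E))^\gamma$ whenever $\phi(s_a) = 0$, first for $a\geq 0$ (using $s_a\geq 0$) and then for general $a$ by linearity. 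Hence $\phi$ descends to the required KMS$_{\beta_c}$ state of $(\OO(X(E)),\bar\alpha)$.

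The main obstacle is establishing the existence of points $z_n$ with $f_{\beta_n}(z_n)\to\infty$. Submultiplicativity $c_{m+n}\leq c_m c_n$ of $c_n := \max_z|h^{-n}(z)|$ together with Fekete's lemma forces $c_n^{1/n}\to e^{\beta_c}$ and $c_n\geq e^{\beta_c n}$ for all $n$, which identifies $e^{\beta_c}$ as the spectral radius of the positive operator $R$ on the Banach lattice $M(Z)$ and places it in $\sigma(R)$ (standard for positive operators on a Banach lattice). Since $(1-e^{-\beta}R)^{-1}\delta_z = \sum_n e^{-\beta n}R^n\delta_z$ has total mass $f_\beta(z)$, the positive-cone operator norm of $(1-e^{-\beta}R)^{-1}$ equals $\sup_z f_\beta(z)$; this norm blows up as $\beta\searrow\beta_c$ because $\sigma(e^{-\beta}R)$ has a point approaching $1$, so the desired $z_n$ exist.
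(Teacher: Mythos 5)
Your argument is correct, and at the decisive step it takes a genuinely different route from ours. The overall scaffolding is the same: weak* limits of the states $\phi_{\varepsilon_n}$ of Theorem~\ref{th1} along $\beta_n\searrow\beta_c$, with $\varepsilon_n$ chosen so that $\|\varepsilon_n\|\to 0$, the identity $\phi_{\varepsilon}\big(\pi(a)-(\psi,\pi)^{(1)}(\varphi(a))\big)=\int a\,d\varepsilon$ forcing the limit to kill the generators of $\ker q$, and a trace/Cauchy--Schwarz argument (this is exactly our Lemma~\ref{nthsuchlemma}, which you essentially re-derive) to pass from generators to the ideal. Where you diverge is in producing points $z_n$ with $f_{\beta_n}(z_n)\to\infty$: we invoke \cite[Proposition~2.3]{FFN} to get a \emph{single} point $p$ with $|h^{-n}(p)|\geq e^{n\beta_c}$ for all $n$, and then compute $\phi\big(\sum_i\psi(\xi_i)\psi(\xi_i)^*\big)=\lim_n (f_{\beta_n}(p)-1)/f_{\beta_n}(p)=1$ directly; you instead let the points vary with $n$ and show $\max_z f_\beta(z)=\|(1-e^{-\beta}R)^{-1}\|\to\infty$ as $\beta\searrow\beta_c$ because $e^{\beta_c}=r(R)$ lies in $\sigma(R)$ (Pringsheim/Krein--Rutman for positive operators, with $\|R^n\|=c_n$ and Fekete giving $r(R)=e^{\beta_c}$). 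Your route is softer and more self-contained --- it sidesteps \cite{FFN} entirely, which is welcome since their proposition is stated for metric spaces and we had to remark in a footnote that their argument does not actually use metrizability --- at the cost of importing the Banach-lattice fact that the spectral radius of a positive operator belongs to its spectrum. (One can even avoid that: if $f_\beta\leq M$ uniformly as $\beta\searrow\beta_c$ then $f_{\beta_c}$ exists, is bounded by $M$, and satisfies $e^{-\beta_c}\sum_{h(w)=z}f_{\beta_c}(w)=f_{\beta_c}(z)-1\leq (1-M^{-1})f_{\beta_c}(z)$, whence $c_n=O\big((1-M^{-1})^ne^{n\beta_c}\big)$, contradicting the definition of $\beta_c$.) Two cosmetic points: the homogeneity bookkeeping in your reduction $\phi(bs_ac)=\sum_k\phi((c_{-k}b_k)s_a)$ picks up harmless factors $e^{\beta_c k}$ from the KMS condition rather than being a bare trace identity; and for the first assertion your direct verification via Proposition~\ref{KMScrit} replaces our citation of \cite[Proposition~5.3.23]{BRII}, which is fine.
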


For the proof we need another variant on \cite[Lemma~10.3]{LR} and \cite[Lemma~2.2]{aHLRS}, where the generating sets $P$ were required to consist of projections.

\begin{lemma}\label{nthsuchlemma}
Suppose $(A,\R,\alpha)$ is a dynamical system, and $J$ is an ideal in $A$ generated by a set $P$ of positive elements which are fixed by $\alpha$. Suppose that there is a family $\mathcal{F}$ of analytic elements such that $\lsp\mathcal{F}$ is dense in $A$, and such that for each $a\in\mathcal{F}$, there is a scalar-valued analytic function $f_a$ satisfying $\alpha_z(a)=f_a(z)a$. If
$\phi$ is a KMS$_{\beta}$ state of $(A,\alpha)$ and $\phi(p)=0$ for all $p\in P$, then $\phi$ factors through a state of $A/J$.
\end{lemma}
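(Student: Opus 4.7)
The plan is to show that $\phi$ vanishes on $J$, so that it descends to a positive linear functional $\tilde\phi$ on $A/J$ with $\|\tilde\phi\|=1$, which is therefore a state. Since $J$ is the closed two-sided ideal generated by $P$ and $\phi$ is continuous, it suffices to prove $\phi(apb)=0$ for every $a,b\in A$ and $p\in P$.

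The first step is the standard observation that a state which vanishes on a positive element vanishes on all its one-sided multiples: for $p\in P$ and $c\in A$, writing $(\pi_\phi,\xi_\phi)$ for the GNS data of $\phi$, the identity $0=\phi(p)=\|\pi_\phi(p^{1/2})\xi_\phi\|^2$ forces $\pi_\phi(p)\xi_\phi=0$, and hence $\phi(cp)=\phi(pc)=0$. The second, and only nontrivial, step uses the KMS condition to upgrade this to a two-sided vanishing. For $a,b\in\mathcal{F}$ the product $ap$ is $\alpha$-analytic --- indeed $\alpha_z(ap)=f_a(z)ap$, since $p$ is $\alpha$-fixed --- and the KMS relation together with $\alpha_{i\beta}(p)=p$ gives
\[
\phi(apb)=\phi\bigl(b\,\alpha_{i\beta}(a)\,\alpha_{i\beta}(p)\bigr)=f_a(i\beta)\,\phi(bap)=0.
\]
By linearity this extends to $a,b\in\lsp\mathcal{F}$, and then by continuity of $\phi$ together with density of $\lsp\mathcal{F}$ in $A$, it extends to all $a,b\in A$. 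Consequently $\phi$ annihilates the dense two-sided ideal spanned by $\{apb:a,b\in A,\,p\in P\}$ together with the elements of $P$ themselves, and continuity of $\phi$ then yields $\phi|_J\equiv 0$.

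I do not expect a serious obstacle: the GNS argument and the KMS computation are both short, and the density extensions are routine. The one point that requires a small amount of care is that the KMS identity $\phi(xy)=\phi(y\alpha_{i\beta}(x))$ is being applied with $x=ap$ analytic but $y=b$ needing no analyticity hypothesis for the final extension; this is legitimate because the identity holds for analytic $x$ and \emph{arbitrary} $y\in A$, both sides being norm-continuous in $y$ and agreeing on the dense set of analytic $y$.
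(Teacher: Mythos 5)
Your proof is correct and follows essentially the same route as the paper's: the paper writes $p=c^2$ with $c=\sqrt{p}$ fixed by $\alpha$, uses Cauchy--Schwarz to get $\phi(cac)=0$, and then applies the KMS condition once to reduce $\phi(apb)=\phi\bigl((ac)(cb)\bigr)$ to a multiple of $\phi(c(ba)c)$; your GNS argument for the one-sided vanishing $\phi(xp)=\phi(px)=0$ is the same positivity step in slightly different clothing, and the subsequent KMS manipulation and density extension are identical in substance.
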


\begin{proof}
Consider $p\in P$. Write $p=c^2$ with $c=\sqrt{c}$. Properties of the functional calculus imply that $c\geq 0$ and $\alpha_t(\sqrt{c})=\sqrt{\alpha_t(c)}=\sqrt{c}$. Now we follow the proof of \cite[Lemma~2.2]{aHLRS}: we prove that $\phi(cac)=0$ for all $a\in A$, and then deduce from the KMS condition that $\phi(apb)=\phi(ac^2b)=0$ for all $a,b\in A$. 
\end{proof}

\begin{proof}[Proof of Theorem~\ref{existbetac}]
Choose a decreasing sequence $\{\beta_n\}$ such that $\beta_n\to \beta_c$ and a probability measure $\nu$ on $Z$. Then $K_n:=\int f_{\beta_n}\,d\nu$ belongs to $[1,\infty)$, and $\varepsilon_n:=K_n^{-1}\nu$ satisfies $\int f_{\beta_n}\,d\varepsilon_n=1$. Thus for each $n$, Theorem~\ref{th1} gives us a KMS$_{\beta_n}$ state $\phi_{\varepsilon_n}$ on $(\TT(X(E)),\alpha)$. By passing to a subsequence, we may assume that $\{\phi_{\varepsilon_n}\}$ converges in the weak* topology to a state $\phi$, and \cite[Proposition~5.3.23]{BRII} implies that $\phi$ is a KMS$_{\beta_c}$ state. 

To find a KMS$_{\beta_c}$ state which factors through $\OO(X(E))$, we apply the construction of the previous paragraph to a particular sequence of measures $\varepsilon_n$. Since each $z\mapsto |h^{-n}(z)|$ is continuous \cite[Lemma~2.2]{BRV}, Proposition~2.3 of \cite{FFN} implies\footnote{Strictly speaking, \cite{FFN} require throughout that their space is metric, but their argument for this proposition does not seem to use this.} that there exists $p\in Z$ such that
\begin{equation}\label{choosep}
|h^{-n}(p)|\geq e^{n\beta_c}\quad\text{for all $n\in \N$.}
\end{equation}
Now we let $\delta_p$ be the unit point mass at $p$, and take $\varepsilon_n:=f_{\beta_n}(p)^{-1}\delta_p$. The argument of the first paragraph yields a KMS$_{\beta_c}$ state $\phi$ on $(\TT(X(E)),\alpha)$ which is a weak* limit of the KMS$_{\beta_n}$ states $\phi_{\varepsilon_n}$.

Next we choose a partition of unity $\{\rho_i:1\leq i\leq k\}$ for $Z$ such that $h$ is injective on each $\supp \rho_i$, and take $\xi_i:=\sqrt\rho_i\in X(E)$ as in the proof of Proposition~\ref{tosubinv}. Temporarily, we write $\phi_A$ for the homomorphism of $A=C(Z)$ into $\LL(X(E))$ given by the left action. A calculation like the one in the second paragraph of the proof of Proposition~\ref{tosubinv} shows that for every $a\in A$, $\phi_A(a)$ is the finite-rank operator $\sum_{i=1}^k\Theta_{a\cdot\xi_i,\xi_i}$. Thus the kernel of the quotient map $q:\TT(X(E))\to \OO(X(E))$ is generated by the elements
\begin{align}\label{genkerq}
\pi(a)-(\psi,\pi)^{(1)}\Big(\sum_{i=1}^k\Theta_{a\cdot\xi_i,\xi_i}\Big)
&=\pi(a)-\sum_{i=1}^k\psi(a\cdot\xi_i)\psi(\xi_i)^*\\
&=\pi(a)\Big(1-\sum_{i=1}^k\psi(\xi_i)\psi(\xi_i)^*\Big),
\end{align}
and hence also by the single element $1-\sum_{i=1}^k\psi(\xi_i)\psi(\xi_i)^*$. Equation~\ref{ineqinTT} implies that this single generator is positive in $\TT(X(E))$, so if we can show that $\phi\big(\sum_{i=1}^k\psi(\xi_i)\psi(\xi_i)^*\big)=1$, then it will follow from Lemma~\ref{nthsuchlemma} that $\phi$ factors through $\OO(X(E))$.

We therefore calculate $\phi\big(\sum_{i=1}^k\psi(\xi_i)\psi(\xi_i)^*\big)$. We write $\mu_n$ for the measure $\sum_{j=0}^\infty e^{-\beta_n j}R^j\varepsilon_n$ of Theorem~\ref{th1}\eqref{resolvent}. Then \eqref{equ2} implies that
\begin{align}\label{startcalcphi}
\phi\big(\sum_{i=1}^k\psi(\xi_i)\psi(\xi_i)^*\Big)&=\lim_{n\to\infty}\sum_{i=1}^k\phi_{\varepsilon_n}\big(\psi(\xi_i)\psi(\xi_i)^*\big)\\
&=\lim_{n\to\infty}e^{-\beta_n}\int \sum_{i=1}^k\langle\xi_i,\xi_i\rangle\,d\mu_n.\notag
\end{align}
Since $h$ is injective on each $\supp\xi_i$, we have
\begin{align*}
\sum_{i=1}^k\langle\xi_i,\xi_i\rangle(z)&=\sum_{i=1}^k\sum_{h(w)=z}\overline{\xi_i(w)}\xi_i(w)=\sum_{h(w)=z}\sum_{i=1}^k|\xi_i(w)|^2\\
&=\sum_{h(w)=z}1=|h^{-1}(z)|.
\end{align*}
Thus
\begin{align*}
e^{-\beta_n}\int \sum_{i=1}^k\langle\xi_i,\xi_i\rangle\,d\mu_n
&=e^{-\beta_n}\int |h^{-1}(z)|\,d\mu_n(z)\\
&=\sum_{j=0}^\infty e^{-\beta_n}e^{-\beta_nj}\int |h^{-1}(z)|\,d(R^j\varepsilon_n)(z)\\
&=\sum_{j=0}^\infty e^{-\beta_n(j+1)}\int\sum_{h^{j}(w)=z}|h^{-1}(w)|\,d\varepsilon_n(z).
\end{align*}
Since $\varepsilon_n$ is a point mass, we have
\begin{align*}
e^{-\beta_n}\int \sum_{i=1}^k\langle\xi_i,\xi_i\rangle\,d\mu_n&=\sum_{j=0}^\infty e^{-\beta_n(j+1)}|h^{-(j+1)}(p)|f_{\beta_n}(p)^{-1}\\
&=\sum_{j=1}^\infty e^{-\beta_nj}|h^{-j}(p)|f_{\beta_n}(p)^{-1}.
\end{align*}
Since $f_{\beta_n}(p)=\sum_{j=0}^\infty e^{-\beta_nj}|h^{-j}(p)|$,
we deduce that
\begin{equation}\label{formbetan}
e^{-\beta_n}\int \sum_{i=1}^k\langle\xi_i,\xi_i\rangle\,d\mu_n=\frac{f_{\beta_n}(p)-1}{f_{\beta_n}(p)}.
\end{equation}

We now need to take the limit of \eqref{formbetan} as $n\to \infty$. Since we chose the point $p$ to satisfy \eqref{choosep}, we have
\[
f_{\beta_n}(p)=\sum_{j=0}^\infty e^{-\beta_nj}|h^{-j}(p)|\geq \sum_{j=0}^\infty \big(e^{-(\beta_n-\beta_c)}\big)^j.
\]
Since $e^{-(\beta_n-\beta_c)}\to 1$ as $n\to \infty$, for fixed $J$ we have 
\[
\sum_{j=0}^J\big(e^{-(\beta_n-\beta_c)}\big)^j\to J+1\quad\text{as $n\to \infty$},
\]
and $f_{\beta_n}(p)\to \infty$ as $n\to \infty$. Thus \eqref{formbetan} converges to $1$ as $n\to \infty$, and \eqref{startcalcphi} implies that
\[
\phi\Big(\sum_{i=1}^k\psi(\xi_i)\psi(\xi_i)^*\Big)=1,
\]
as required.
\end{proof}

\begin{remark}\label{compwithT}
Theorem~\ref{existbetac}, and in particular the existence of KMS states on $(\OO(X(E)),\bar\alpha)$ at the inverse temperature $\beta_c$, overlaps with work of Thomsen \cite{Th1}. His results concern KMS states on the $C^*$-algebra of a Deaconu-Renault groupoid, but his Theorem~3.1 identifies his reduced groupoid algebra $C^*_r(\Gamma_h)$ as an Exel crossed product $D\rtimes_{\alpha,L}\N$. In our setting, where the space $Z$ is compact Hausdorff, his $D$ is $C(Z)$, his endomorphism $\alpha$ is given by $\alpha(f)=f\circ h$, and his transfer operator $L$ is given by $L(f)(z)=|h^{-1}(z)|^{-1}\sum_{h(w)=z}f(w)$; Thomsen's Exel crossed product is the Cuntz-Pimsner bimodule of a Hilbert bimodule $M_L$ \cite[Proposition~3.10]{BR}. The bimodule is not quite the same as our $X(E)$, but the map $U:X(E)\to M_L$ given by $(Uf)(z)=|h^{-1}(h(z))|^{1/2}f(z)$ is an isomorphism of $X(E)$ onto $M_L$ (see \cite[\S6]{BRV}). So our $\OO(X(E))$ is naturally isomorphic to the $C^*$-algebra $C_r^*(\Gamma_h)$ in \cite{Th1}. This isomorphism carries the gauge action $\gamma:\T\to \Aut \OO(X(E))$ into the gauge action $\tau$ used in \cite[\S6]{Th1}, and hence our action $\bar\alpha$ is the action $\alpha^F$ of \cite{Th1} for the function $F\equiv 1$ (see the top of \cite[page~414]{Th1}).

For $F\equiv 1$, the sequences $A_F^\phi(k)$ and $B_F^\phi(k)$ in \cite[\S6]{Th1} are given by $A_F^\phi(k)=k=B_F^\phi(k)$, and hence the numbers $A_F^\phi=\lim_{k\to\infty}k^{-1}A_F^\phi(k)$ and $B_F^\phi$ are both $1$. The number $h_m(\phi)$ in \cite[\S6]{Th1} is our $\beta_c$. Thus \cite[Theorem~6.12]{Th1} implies that our system $(\OO(X(E)),\bar\alpha)$ has a KMS$_{\beta_c}$ state. Our approach through $\TT(X(E))$ seems quite different.
\end{remark}

\section{The shift on the path space of a graph}

In this section we consider a finite directed graph $E=(E^0,E^1,r,s)$. In the conventions of \cite{Ra}, we write $E^\infty$ for the set of infinite paths $z=z_1z_2\cdots$ with $s(z_i)=r(z_{i+1})$. The cylinder sets
\[
Z(\mu)=\{z\in E^\infty: z_i=\mu_i\text{ for }i\leq |\mu|\}
\]
form a basis of compact open sets for a compact Hausdorff topology on $E^\infty$. The shift $\sigma:E^\infty\to E^\infty$ is defined by $\sigma(z)=z_2z_3\cdots$. Then $\sigma$ is a local homeomorphism --- indeed, for each edge $e\in E^1$, $\sigma$ is a homeomorphism of $Z(e)$ onto $Z(s(e))$ --- and is a surjection if and only if $E$ has no sinks. Shifts on path spaces were used extensively in the early papers on graph algebras, and in particular in the construction of the groupoid model \cite{KPRR}. Here we shall use them to illustrate our results and those of Thomsen \cite{Th1}.

We consider the topological graph $(E^\infty, E^\infty,\id,\sigma)$, and write $X(E^\infty)$ for the associated Hilbert bimodule over $C(E^\infty)$. The Cuntz-Pimsner algebra $\OO(X(E^\infty))$ is isomorphic to the graph $C^*$-algebra $C^*(E)$ (this is essentially a result from \cite{BRV} --- see the end of the proof below). The relationship between the Toeplitz algebra $\TT(X(E^\infty))$ and the Toeplitz algebra $\TT C^*(E)$ is more complicated.

\begin{prop}\label{includeTalgs}
Suppose that $E$ is a finite directed graph. Then the elements $S_e:=\psi(\chi_{Z(e)})$ and $P_v:=\pi(\chi_{Z(v)})$ of $\TT(X(E^\infty))$ form a Toeplitz-Cuntz-Krieger family. The corresponding homomorphism $\pi_{S,P}$ of $\TT C^*(E)$ into $\TT(X(E^\infty))$ is injective, and $q\circ \pi$ factors through an isomorphism of $C^*(E)$ onto $\OO(X(E^\infty))$. Both isomorphisms intertwine the respective gauge actions of $\T$.
\end{prop}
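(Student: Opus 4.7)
The plan is to verify the Toeplitz-Cuntz-Krieger relations by direct bimodule computations, deduce injectivity of $\pi_{S,P}$ from a gauge-invariant uniqueness theorem, and identify the kernel of the composition with $q$ as the Cuntz-Krieger ideal; surjectivity onto $\OO(X(E^\infty))$ can then be imported from \cite{BRV}.

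First I would verify that $(S_e,P_v)$ form a Toeplitz-Cuntz-Krieger $E$-family. Mutual orthogonality of the $P_v$ is immediate from the disjointness of the cylinders $Z(v)$. For $S_e^*S_e=P_{s(e)}$, the Toeplitz relation $\psi(x)^*\psi(y)=\pi(\langle x,y\rangle)$ together with the fact that $\sigma$ restricts to a homeomorphism $Z(e)\to Z(s(e))$ gives $\langle \chi_{Z(e)},\chi_{Z(e)}\rangle=\chi_{Z(s(e))}$. For the TCK inequality $\sum_{r(e)=v}S_eS_e^*\leq P_v$, I would first compute the left action and identify $\varphi(\chi_{Z(v)})\in \LL(X(E^\infty))$ with the finite-rank operator $\sum_{r(e)=v}\Theta_{\chi_{Z(e)},\chi_{Z(e)}}$, using the partition $Z(v)=\bigsqcup_{r(e)=v}Z(e)$. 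The inequality then follows from the Fock-representation argument already used to establish \eqref{ineqinTT} in the proof of Proposition~\ref{tosubinv}, applied with the $\chi_{Z(e)}$ in place of the $\xi_i$.

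For injectivity of $\pi_{S,P}$, both algebras carry gauge actions of $\T$ and $\pi_{S,P}$ is equivariant since $\gamma_z(\psi(\chi_{Z(e)}))=z\psi(\chi_{Z(e)})$ and $\gamma_z(\pi(\chi_{Z(v)}))=\pi(\chi_{Z(v)})$. By the gauge-invariant uniqueness theorem for Toeplitz graph algebras, it suffices to check that $\pi_{S,P}(P_v)\neq 0$ and $\pi_{S,P}(P_v-\sum_{r(e)=v}S_eS_e^*)\neq 0$ for every non-sink $v$. The first is immediate since $\chi_{Z(v)}\neq 0$ in $C(E^\infty)$ and $\pi$ is injective. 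For the second I would use the Fock representation: each $\psi(\chi_{Z(e)})\psi(\chi_{Z(e)})^*$ annihilates the zeroth summand $C(E^\infty)\subseteq F(X(E^\infty))$, whereas $\pi(\chi_{Z(v)})$ restricts there to multiplication by the nonzero function $\chi_{Z(v)}$.

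Finally, the identification in the first part shows $\varphi(\chi_{Z(v)})\in \KK(X(E^\infty))$, so $q(P_v-\sum_{r(e)=v}S_eS_e^*)=0$ by definition of $\OO(X(E^\infty))$. Hence $q\circ\pi_{S,P}$ kills the generators of the Cuntz-Krieger ideal of $\TT C^*(E)$ and factors through a gauge-equivariant $*$-homomorphism $C^*(E)\to \OO(X(E^\infty))$, which is injective by the gauge-invariant uniqueness theorem for $C^*(E)$ combined with the non-vanishing of its values on the $P_v$. The main obstacle will be surjectivity: one must show that the images of the $S_e$ and $P_v$ generate all of $\OO(X(E^\infty))$, equivalently that the cylinder functions $\chi_{Z(\mu)}$ for arbitrary finite paths $\mu$ lie in the image. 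Rather than redo this by hand, I would invoke the identification of $\OO(X(E^\infty))$ with $C^*(E)$ in \cite[Theorem~5.1]{BRV}, checking that their isomorphism matches our generators. Gauge equivariance of both maps is then built into the construction.
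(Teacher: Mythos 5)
Your proposal is correct and follows essentially the same route as the paper: verify the Toeplitz--Cuntz--Krieger relations from the bimodule structure, use the Fock representation to see that $P_v\neq\sum_{r(e)=v}S_eS_e^*$ and hence that $\pi_{S,P}$ is faithful, observe that the left action of $C(E^\infty)$ is by finite-rank operators so the Cuntz--Krieger relation holds after applying $q$, and defer the identification of $\OO(X(E^\infty))$ with $C^*(E)$ to \cite[Theorem~5.1]{BRV}. The only cosmetic differences are that the paper gets the inequality $\sum_{r(e)=v}S_eS_e^*\leq P_v$ directly from mutual orthogonality of the range projections rather than via the Fock positivity argument, and it records the computation $q\circ\pi(\chi_{Z(\mu)})=q(S_\mu S_\mu^*)$, which is exactly the surjectivity point you correctly identify as the remaining obstacle.
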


\begin{proof}
Since the $\chi_{Z(v)}$ are mutually orthogonal projections in $C(E^\infty)$, the $\{P_v:v\in E^0\}$ are mutually orthogonal projections in $\TT(X(E^\infty))$. For $e,f\in E^1$, we have
\[
S_e^*S_f=\psi(\chi_{Z(e)})^*\psi(\chi_{Z(f)})=\pi\big(\langle\chi_{Z(e)},\chi_{Z(f)}\rangle\big).
\]
A calculation shows that $\langle\chi_{Z(e)},\chi_{Z(f)}\rangle$ vanishes unless $e=f$, and then equals $\chi_{Z(s(e))}$; this implies that $S_e^*S_e=P_{s(e)}$, and that the range projections $S_eS_e^*$ and $S_fS_f^*$ are mutually orthogonal. Since the left action satisfies $\chi_{Z(v)}\cdot \chi_{Z(e)}=\chi_{Z(e)}$ when $v=r(e)$, we have $P_vS_eS_e^*=S_eS_e^*$ when $v=r(e)$, and $P_v\geq \sum_{r(e)=v} S_eS_e^*$. Thus $(S,P)$ is a Toeplitz-Cuntz-Krieger family. Since the adjoints $\psi(x)^*$ vanish on the $0$-summand in the Fock module and the representation $\pi$ is faithful there, $P_v\not=\sum_{r(e)=v}S_eS_e^*$ as operators on the Fock module $F(X(E^\infty))$. Thus \cite[Corollary~4.2]{FR} implies that $\pi_{S,P}$ is faithful. Since the gauge actions satisfy $\gamma_z(s_e)=zs_e$ and $\gamma_z(\psi(f))=z\psi(f)$, we have $\pi_{S,P}\circ \gamma=\gamma\circ \pi_{S,P}$.

The left action of $\chi_{Z(\mu)}$ in $X(E^\infty)$ is the finite rank operator $\Theta_{\chi_{Z(\mu)},\chi_{Z(\mu)}}$, and hence we have
\begin{align}\label{compinO}
q\circ\pi(\chi_{Z(\mu)})&=q\circ(\pi, \psi)^{(1)}(\Theta_{\chi_{Z(\mu)},\chi_{Z(\mu)}})\\
&=q\big(\psi^{\otimes|\mu|}(\chi_{Z(\mu)})\psi^{\otimes|\mu|}(\chi_{Z(\mu)})^*\big)\notag\\
&=q(S_\mu S_\mu^*).\notag
\end{align}
Thus every $q\circ\pi(\chi_{Z(\mu)})$ belongs to $C^*(q(S_e),q(P_v))$, and $q\circ \pi(C(E^\infty))$ is contained in $C^*(q(S_e),q(P_v))$. Since $\chi_{Z(v)}=\sum_{r(e)=v}\chi_{Z(e)}$ in $C(E^\infty)$, the calculation \eqref{compinO} shows that $(q\circ S,q\circ P)$ is a Cuntz-Krieger family in $\OO(X(E))$, and the induced homomorphism $\pi_{q\circ S,q\circ P}:C^*(E)\to \OO(X(E^\infty))$ carries the action studied in \cite{aHLRS} to the one we use here. This homomorphism intertwines the gauge actions, and an application of the gauge-invariant uniqueness theorem shows that $\pi_{q\circ S,q\circ P}$ is an isomorphism of $C^*(E)$ onto $\OO(X(E^\infty))$. (The details are in \cite[Theorem~5.1]{BRV}, modulo some scaling factors which come in because the inner product in \cite{BRV} is defined using a transfer operator $L$ which has been normalised so that $L(1)=1$ (see the discussion in \cite[\S9]{BRV}). With our conventions, $L(1)$ would be the function $z\mapsto |\sigma^{-1}(z)|$. Theorem~5.1 of \cite{BRV} extends an earlier theorem of Exel for Cuntz-Krieger algebras \cite[Theorem~6.2]{E2}.)
\end{proof}

\begin{remark}
While Proposition~\ref{includeTalgs} implies that the Toeplitz algebra $\TT(X(E^\infty))$ contains a faithful copy  of $\TT C^*(E)$, Corollary~\ref{restrictionssame} implies that  $\TT (X(E^\infty))$ is substantially larger than $\TT C^*(E)$: for example, there seems to be no way to get $\pi(\chi_{Z(\mu)})$ in $C^*(S,P)$.
\end{remark} 

Since the injections of Proposition~\ref{includeTalgs} intertwine the gauge actions, they also intertwine the dynamics studied in \cite{aHLRS} with those studied here (and there seems little danger in calling them all $\alpha$). Thus applying our results to the local homeomorphism $\sigma$ gives us KMS states on $(\TT C^*(E),\alpha)$ and $(C^*(E),\alpha)$, and we should check that our results are compatible with those of \cite{aHLRS}. 

When $E$ is strongly connected, the system $(C^*(E),\alpha)$ has a unique KMS state, and its inverse temperature is the natural logarithm of the spectral radius $\rho(A)$ of the vertex matrix $A$ of $E$ \cite[Theorem~4.3]{aHLRS} (see also \cite{EFW} and \cite{KW}). So Theorem~\ref{existbetac} implies that, for strongly connected $E$, our critical inverse temperature $\beta_c$ must be $\ln\rho(A)$. Of course, we should be able to see this directly, and in fact it is true for all finite directed graphs. (The restriction to graphs with cycles in the next proposition merely excludes the trivial cases in which $E^\infty$ is empty and $\rho(A)=0$.)

\begin{prop}\label{compbetac}
Suppose that $E$ is a finite directed graph with at least one cycle. Let $A$ denote the vertex matrix of $E$, and let $\sigma$ denote the shift on the infinite-path space $E^\infty$. Then
\[
\frac{1}{N}\ln\Big(\max_{z\in E^\infty}|\sigma^{-N}(z)|\Big)\to \ln \rho(A)\quad\text{as $N\to \infty$.}
\]
\end{prop}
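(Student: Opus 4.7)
The plan is to identify $|\sigma^{-N}(z)|$ with a column sum of $A^N$, and then to invoke Gelfand's formula for the spectral radius applied to an appropriate submatrix of $A$.

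First I would observe that every $w\in\sigma^{-N}(z)$ has the form $w_1\cdots w_N z_1 z_2\cdots$, where $w_1\cdots w_N$ is a path of length $N$ satisfying only the compatibility $s(w_N)=r(z_1)$. Thus $|\sigma^{-N}(z)|$ depends only on $v:=r(z_1)$, and it equals the $v$-th column sum $\sum_{u\in E^0}A^N(u,v)$ of $A^N$ (recalling that $A(u,v)$ counts edges $e$ with $r(e)=u$ and $s(e)=v$). Letting $V:=\{r(z_1):z\in E^\infty\}$, which is nonempty because $E$ has a cycle, we obtain
\[
\max_{z\in E^\infty}|\sigma^{-N}(z)|=\max_{v\in V}\sum_{u\in E^0}A^N(u,v).
\]

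Next I would reduce the problem to a submatrix via a block decomposition. Setting $W:=E^0\setminus V$, the recursive characterisation $v\in V$ iff $v$ emits (in the range sense) an edge to $V$ implies that every edge $e$ with $r(e)\in W$ has $s(e)\in W$. Ordering vertices as $V$ followed by $W$ therefore puts $A$ into block upper-triangular form
\[
A=\begin{pmatrix} A_{VV} & A_{VW} \\ 0 & A_{WW}\end{pmatrix}.
\]
Since any cycle contained entirely in $W$ would produce an infinite path starting in $W$ (contradicting the definition of $W$), the finite matrix $A_{WW}$ is nilpotent, so $\rho(A)=\rho(A_{VV})$. The block form is inherited by all powers $A^N$, and a short induction on the path length shows that every path with $s(\mu_N)\in V$ stays entirely in $V$; hence for $v\in V$ we have $\sum_{u\in E^0}A^N(u,v)=\sum_{u\in V}A_{VV}^N(u,v)$, and the first displayed maximum is exactly $\|A_{VV}^N\|_1$ (the $\ell^1$ operator norm of $A_{VV}^N$, namely its maximum column sum).

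Gelfand's formula for the finite matrix $A_{VV}$ now gives $\|A_{VV}^N\|_1^{1/N}\to\rho(A_{VV})=\rho(A)$. Since the presence of a cycle in $E$ forces $\rho(A)\geq 1>0$, we may take logarithms and divide by $N$ to reach the claim. The only delicate point is the restriction of the maximum to $v\in V$, which precludes a direct application of Gelfand to $A$ itself; the block decomposition circumvents this cleanly by isolating the ``live'' component $A_{VV}$ from the nilpotent transient part $A_{WW}$.
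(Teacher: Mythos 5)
Your argument is correct, and it reaches the conclusion by a genuinely different and shorter route than the paper. The paper identifies $\max_{z\in E^\infty}|\sigma^{-N}(z)|$ with the maximum column sum of $A^N$ in \eqref{idinvimage} and then establishes the two-sided estimate $\rho(A)^N\leq \max_{z}|\sigma^{-N}(z)|\leq \beta_E N^{n-1}\rho(A)^N$, using the Perron--Frobenius eigenvector on each strongly connected component (Lemma~\ref{colsums}) and an induction on the number of components (Lemma~\ref{RHest}), before invoking the squeeze principle. You instead restrict attention to the set $V$ of vertices which receive infinite paths, recognise the quantity of interest as $\|A_{VV}^N\|_1$, and let Gelfand's formula do all the work, with the nilpotency of $A_{WW}$ supplying $\rho(A_{VV})=\rho(A)$ and the cycle supplying $\rho(A)\geq 1$ so that logarithms may be taken; this bypasses Perron--Frobenius theory and the component induction entirely. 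Your caution about $V$ is also well placed: when $E$ has sinks, $V=\{r(z):z\in E^\infty\}$ can be a proper subset of $E^0$ and the two maxima in \eqref{idinvimage} can genuinely differ, so your block decomposition is doing real work there (for sink-free graphs, which is the situation the paper ultimately cares about since $\sigma$ must be surjective, one has $V=E^0$ and Gelfand's formula could even be applied to $A$ directly). What the paper's longer argument buys is the explicit polynomial envelope $\rho(A)^N\leq\max_{z}|\sigma^{-N}(z)|\leq\beta_E N^{n-1}\rho(A)^N$, which is sharper quantitative information than the logarithmic limit, although that extra precision is not used elsewhere in the paper.
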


Although this result was easy to conjecture, it does not seem to be so easy to prove, and we need a couple of lemmas. In retrospect, our arguments are similar in flavour to those used in computations of entropy (see \cite[Chapter]{LM}, for example).

For each $z\in E^\infty$, we have $|\sigma^{-N}(z)|=|E^Nr(z)|$. Since $|vE^Nw|=A^N(v,w)$, we have 
\begin{equation}\label{idinvimage}
\max_{z\in E^\infty}|\sigma^{-N}(z)|=\max_{z\in E^0}\sum_{v\in E^0}A^N(v,r(z))=\max_{w\in E^0}\sum_{v\in E^0}A^N(v,w).
\end{equation}
So we are interested in the column sums $\sum_v A^N(v,w)$ of the powers $A^N$. 

When $E$ is strongly connected, $A$ is irreducible in the sense of Perron-Frobenius theory: for each $v,w\in E^0$, there exists $N$ such that $A^N(v,w)>0$. We then have the following consequence of the Perron-Frobenius theorem.

\begin{lemma}\label{colsums}
Suppose that $E$ is a strongly connected finite directed graph with vertex matrix $A$.  Then there is a positive constant $K$ such that, for all $N\in \N$ and $w\in E^0$, 
\begin{equation}\label{colsumsest}
K\Big(\sum_{v\in E^0}A^N(v,w)\Big)\leq \rho(A)^N\leq \max_{u\in E^0}\sum_{v\in E^0}A^N(v,u).
\end{equation}
\end{lemma}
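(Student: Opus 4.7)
The plan is to apply the Perron--Frobenius theorem. Strong connectedness of $E$ makes its vertex matrix $A$ irreducible (for any $v,w$ there is $N$ with $A^N(v,w)>0$), so Perron--Frobenius supplies a strictly positive left eigenvector $y=(y_v)_{v\in E^0}$ with $yA=\rho(A)y$. Iterating gives the key identity
\[
\sum_{v\in E^0} y_v\,A^N(v,w)= \rho(A)^N\,y_w \qquad\text{for all $w\in E^0$ and $N\in\N$,}
\]
and everything is extracted from this by bounding $y$ above and below by the finite positive constants $M:=\max_v y_v$ and $m:=\min_v y_v>0$.

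For the left-hand inequality in \eqref{colsumsest}, I would estimate the left side of the displayed identity from below by $m\sum_v A^N(v,w)$ and the right side from above by $M\rho(A)^N$, obtaining $m\sum_v A^N(v,w)\le M\rho(A)^N$ uniformly in $N$ and $w$. Taking $K:=m/M$ (which is independent of $N$ and $w$) yields the required inequality.

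For the right-hand inequality, the same identity suffices: bounding the left side from above by $M\sum_v A^N(v,w)$ gives $\sum_v A^N(v,w)\ge (y_w/M)\rho(A)^N$ for \emph{every} $w$, and choosing $w$ to maximise $y_w$ (so that $y_w=M$) gives $\max_u\sum_v A^N(v,u)\ge\rho(A)^N$. Alternatively, and perhaps more cleanly, one can observe that $\max_u\sum_v A^N(v,u)$ is precisely the $\ell^1\to\ell^1$ operator norm of $A^N$, and invoke the general inequality $\rho(B)\le\|B\|$ for any submultiplicative norm applied to $B=A^N$ (using $\rho(A^N)=\rho(A)^N$); this direction does not actually require strong connectedness.

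The only non-routine ingredient is Perron--Frobenius for the irreducible nonnegative matrix $A$, which is the source of the strictly positive left eigenvector; once that is available everything is elementary bounding, and the only minor point to watch is that $m$ and $M$ are finite and positive because $E^0$ is finite and $y>0$.
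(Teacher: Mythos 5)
Your proof is correct and is essentially the paper's argument: the paper also reduces everything to the Perron--Frobenius theorem for the irreducible matrix $A$ and elementary bounding by the extreme entries of a strictly positive eigenvector, the only cosmetic difference being that it uses the right eigenvector (summing $A^Nx=\rho(A)^Nx$ over $v$ to get $\sum_w(\sum_v A^N(v,w))x_w=\rho(A)^N$) where you use the left one. Your side remark that the upper bound for $\rho(A)^N$ is just $\rho(B)\le\|B\|_{1\to 1}$ applied to $B=A^N$, needing no irreducibility, is a nice observation not made in the paper.
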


\begin{proof}
Let $x$ be the unimodular Perron-Frobenius eigenvector of $A$, as in \cite[Theorem~1.5]{Se}; then $x$ has positive entries and $Ax=\rho(A)x$. Let $N\in \N$. Then $A^Nx=\rho(A)^Nx$, or equivalently
\begin{equation*}\label{xeigenA}
\sum_{w\in E^0}A^N(v,w)x_w=\rho(A)^N x_v\quad\text{for $v\in E^0$.}
\end{equation*}
Summing over $v$ and interchanging the order of summation gives
\begin{equation}\label{reversesum}
\sum_{w\in E^0}\sum_{v\in E^0}A^N(v,w)x_w=\sum_{v\in E^0}\rho(A)^N x_v=\rho(A)^N.
\end{equation}
Since $x$ is unimodular, \eqref{reversesum} implies that
\[
\rho(A)^N\leq \sum_{w\in E^0} \Big(\max_{u\in E^0}\sum_{v\in E^0}A^N(v,u)\Big)x_w=\max_{u\in E^0}\sum_{v\in E^0}A^N(v,u),
\]
which is the right-hand inequality in \eqref{colsumsest}. For the left-hand inequality, we note that the outside sum on the left-hand side of \eqref{reversesum} is greater than each entry $\sum_{v} A^N(v,w)x_w$, and hence $K:=\min_{w\in E^0}x_w$ has the required property.
\end{proof}

For a more general graph $E$, we define $v\leq w\Longleftrightarrow vE^*w\not=\emptyset$, and then the relation $v\sim w\Longleftrightarrow v\leq w\text{ and }w\leq v$ is an equivalence relation. The equivalence classes or \emph{components} $C$ are then either \emph{trivial} in the sense that $C$ consist of a single vertex $v$ at which there is no loop, or \emph{nontrivial} in the sense that $C:=(C,r^{-1}(C)\cap s^{-1}(C),r,s)$ is a strongly connected graph. 

%The structure theory of Seneta \cite[\S1.2]{Se} allows us to order the vertices so that the vertex matrix $A$ of $E$ is block upper triangular. We choose first the vertices in trivial components which no not feed into nontrivial ones, then the blocks of vertices in nontrivial components which feed only into vertices we have already chosen, then the trivial components which feed only into the vertices, then $\dots$. As in \cite{aHLRS2}, we call such a block decomposition a \emph{Seneta decomposition} of $A$.

If $H$ is a subset of $E^0$, then $H:=(H, r^{-1}(H)\cap s^{-1}(H),r,s)$ is a subgraph of $E$ whose vertex matrix $A_H$ is the restriction of $A$ to $H\times H$. If $H$ is hereditary in the sense that $v\in H$ and $v\leq w$ imply $w\in H$, then the vertex matrix $A$ has a block decomposition
\[
A=\begin{pmatrix}A_{E\backslash H}&A_{E\backslash H,H}\\
0&A_H\end{pmatrix}.
\]

\begin{lemma}\label{RHest}
Suppose that $n\geq 1$ and $E$ is a finite directed graph with $n$  components and at least one cycle. Then there is a constant $\beta_E$ such that
\[
\sum_{v\in E^0}A^N(v,w)\leq \beta_E N^{n-1}\rho(A)^N\quad\text{for all $N\geq 1$ and $w\in E^0$.}
\]
\end{lemma}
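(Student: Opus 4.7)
The plan is to induct on $n$, the number of components of $E$. For the base case $n=1$, the hypothesis that $E$ contains a cycle forces the single component to be nontrivial, so $E$ is strongly connected and Lemma~\ref{colsums} gives the bound immediately with $\beta_E = K^{-1}$, since $N^{n-1} = 1$.

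For the inductive step I would select a component $C$ that is minimal in the partial order (so no edges leave $C^0$), making $H := C^0$ hereditary. The block decomposition displayed before the statement of the lemma then yields
\[
A^N = \begin{pmatrix} P^N & B_N \\ 0 & R^N \end{pmatrix}, \qquad B_N = \sum_{k=0}^{N-1} P^{N-1-k}\, Q\, R^k,
\]
with $P = A_{E\setminus H}$, $Q = A_{E\setminus H, H}$, $R = A_H$, and $\rho(A) = \max(\rho(P), \rho(R))$. The subgraph $E\setminus H$ has $n-1$ components; if it contains a cycle, the inductive hypothesis bounds column sums of $P^j$ by a constant multiple of $(j+1)^{n-2}\rho(P)^j$, and otherwise $P$ is nilpotent so those column sums are uniformly bounded (and vanish for large $j$). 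For $R$, Lemma~\ref{colsums} bounds column sums of $R^k$ by a constant times $\rho(R)^k$ when $H$ is nontrivial, while $R = 0$ if $H$ is trivial. Moreover $\rho(A)\geq 1$ because $E$ contains a cycle, so additive constants can be absorbed into $\beta_E N^{n-1}\rho(A)^N$.

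For $w\notin H$ only the block $P^N$ contributes, and the inductive bound finishes the argument. For $w\in H$ the diagonal term $\sum_{v\in H}R^N(v,w)$ is handled directly, and the cross term $\sum_{v\in E\setminus H} B_N(v,w)$ reduces, after substituting the column-sum bounds on $P^{N-1-k}$ and $R^k$, to controlling the convolution
\[
\sum_{k=0}^{N-1}(N-k)^{n-2}\rho(P)^{N-1-k}\rho(R)^k.
\]
When $\rho(P)\neq\rho(R)$ this sum is dominated geometrically by a constant times $\max(\rho(P),\rho(R))^{N-1}$, and when $\rho(P) = \rho(R) = \rho(A)$ it picks up an extra factor of $N$ from summing the polynomial weight; either way the result is at most a constant times $N^{n-1}\rho(A)^N$. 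Combining the pieces closes the induction.

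The main obstacle is the case analysis (trivial versus nontrivial $H$, $E\setminus H$ with or without cycles, and $\rho(P)$ equal to or different from $\rho(R)$), together with tracking the exponent of $N$ carefully. The coincidence $\rho(P)=\rho(R)$ is precisely where the polynomial exponent jumps from $n-2$ to $n-1$, so it is what makes the bound $N^{n-1}$ necessary rather than improvable, and its potential recurrence at each layer of the induction is what drives the exponent from the base value $0$ up to $n-1$.
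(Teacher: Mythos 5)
Your argument is correct and is essentially the paper's proof: induct on the number of components, peel off a hereditary component $C$, use the displayed block-triangular decomposition together with Lemma~\ref{colsums}, and bound the cross term by the convolution sum over $k$. The only difference is that the paper skips your case analysis on $\rho(P)$ versus $\rho(R)$ entirely, crudely bounding both spectral radii by $\rho(A)$ and the $k$-sum by $N\cdot(N-1)^{n-1}$, which unconditionally supplies the one extra factor of $N$ allowed at each inductive step; your observation that $\rho(A)\geq 1$ (and your explicit treatment of a cycle-free $E\setminus C$) is a point the paper leaves implicit.
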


\begin{proof}
We prove this by induction on $n$. For $n=1$, the graph is strongly connected, Lemma~\ref{colsums} says there is $K>0$ such that 
\[
\sum_{v\in E^0}A^N(v,w)\leq K^{-1}\rho(A)^N\quad\text{for all $N$ and $w\in E^0$}, 
\]
and then $\beta_E:=K^{-1}$ has the required property. So we suppose that $n\geq 1$, that the lemma is true for every graph with at most $n$ components, and that $E$ has $n+1$ components. We suppose that $C$ is a component which does not receive any edges from other components. Then $C$ is hereditary, and we can decompose $A$
\[
A=\begin{pmatrix}A_{E\backslash C}&A_{E\backslash C,C}\\
0&A_C\end{pmatrix}.
\]
Notice that $A_C$ is either $(0)$ or is irreducible, in which case we can apply the inductive hypothesis to the graph $C=(C,r^{-1}(C),r,s)$ with $n=1$. If $w\in E^0\backslash C$, then every path with source $w$ lies entirely in $E\backslash C$, and for $N\geq 1$
\[
\sum_{v\in E^0}A^N(v,w)=\sum_{v\in E^0\backslash C}A_{E\backslash C}^N(v,w)\leq \beta_{E\backslash C}N^{n-1}\rho(A)^N\leq \beta_{E\backslash C}N^{n}\rho(A)^N.
\]
So it remains to deal with the case where $w\in C$.

Suppose that $w\in C$. Then paths with source $w$ either stay in $C$, or leave $C$ and immediately go to $E^0\backslash C$. Thus
\begin{align*}
\sum_{v\in E^0}&A^N(v,w)=\sum_{v\in C}A_C^N(v,w)+\sum_{k=0}^{N-1}\sum_{v\in E^0}\big(A_{E\backslash C}^{N-1-k}A_{E\backslash C,C}A_C^k\big)(v,w)\\
&=\sum_{v\in C}A_C^N(v,w)+\sum_{k=0}^{N-1}\sum_{v,t\in E^0\backslash C}\sum_{u\in C}A_{E\backslash C}^{N-1-k}(v,t)A_{E\backslash C,C}(t,u)A_C^k(u,w).
\end{align*}
Set $L:=\max_{t,u}A_{E\backslash C,C}(t,u)$. Then
\begin{align*}
\sum_{v\in E^0}&A^N(v,w)\leq\sum_{v\in C}A_C^N(v,w)+\sum_{k=0}^{N-1}\sum_{v,t\in E^0\backslash C}A_{E\backslash C}^{N-1-k}(v,t)L\sum_{u\in C}A_C^k(u,w).
\end{align*}

If $A_C=(0)$, then only the $k=0$ term survives, and $\beta_E:=\beta_{E\backslash C}$ suffices. Otherwise, we can apply the inductive hypothesis to $A_C$ (with $n=1$) and to each $A_{E\backslash C}^{N-1-k}$. With $\beta':=\beta_{E\backslash C}L\beta_C$, this gives:
\begin{align*}
\sum_{v\in E^0}&A^N(v,w)\leq\beta_C\rho(A_C)^N+\sum_{k=0}^{N-1}\sum_{t\in E^0\backslash C}\beta'(N-1-k)^{n-1}\rho(A_{E\backslash C})^{N-1-k}\rho(A_C)^k\\
&\leq \beta_C\rho(A_C)^N+\sum_{k=0}^{N-1}|E^0\backslash C|\beta'(N-1-k)^{n-1}\rho(A_{E\backslash C})^{N-1-k}\rho(A_C)^k.
\end{align*}
Next recall that $\rho(A)=\max\{\rho(A_{E\backslash C}),\rho(A_C)\}$. Thus
\begin{align*}
\sum_{v\in E^0}A^N(v,w)
&\leq\beta_C\rho(A)^N+\sum_{k=0}^{N-1}|E^0\backslash C|\beta'(N-1-k)^{n-1}\rho(A)^{N-1-k}\rho(A)^k\\
&\leq\beta_C\rho(A)^N+N|E^0\backslash C|\beta'(N-1)^{n-1}\rho(A)^{N-1}\\
&\leq \big(\beta_C+|E^0\backslash C|\beta'\big)N^{n}\rho(A)^{N},
\end{align*}
and $\beta_E:=\beta_C+|E^0\backslash C|\beta'$ has the required property. (Since $\beta_E$ is then larger than $\beta_{E\backslash C}$, the cases $w\in E^0\backslash C$ and $A=(0)$ are covered too.)
\end{proof}

\begin{proof}[Proof of Proposition~\ref{compbetac}]
For $z\in E^\infty$, $|\sigma^{-1}(z)|$ is the $r(z)$-column sum of $A$, so we wish to estimate $\sum_v A^N(v,w)$ in terms of $\rho(A)$. We can order the vertices of $E$ to ensure that $A$ is an upper-triangular block matrix whose diagonal blocks are either $(0)$ or $A_C$ for some nontrivial component of $E$ (see \cite[\S2.3]{aHLRS2}, for example). Thus $\rho(A)=\max_C \rho(A_C)$. Because $E$ has at least one cycle, $\rho(A)=\rho(A_C)$ for some nontrivial component $C$. Then from the right-hand estimate in \eqref{colsumsest}, we have for every $N\geq 1$
\begin{equation}\label{LHest}
\rho(A)^N=\rho(A_C)^N\leq\max_{w\in C}\sum_{v\in C}A_C^N(v,w)
\leq\max_{w\in E^0}\sum_{v\in E^0}A^N(v,w).
\end{equation}
Thus Lemma~\ref{RHest} implies that
\[
\rho(A)^N\leq\max_{z\in E^\infty}|\sigma^{-N}(z)|\leq \beta_EN^{n-1}\rho(A)^N\quad\text{for all $N\geq 1$.}
\]
Thus for all $N\geq 1$, we have
\[
\ln\rho(A)=\frac{1}{N}\ln\rho(A)^N\leq\frac{1}{N}\ln\Big(\max_{z\in E^\infty}|\sigma^{-N}(z)|\Big)\leq \frac{1}{N}\ln\big(\beta_EN^{n-1}\rho(A)^N\big).
\]
Since both the left-hand and right-hand sides converge to $\ln\rho(A)$ as $N\to \infty$, the squeeze principle implies that $\frac{1}{N}\ln\big(\max_{z\in E^\infty}|\sigma^{-N}(z)|\big)\to \ln \rho(A)$ also.
\end{proof}

Proposition~\ref{compbetac} implies that, for the shifts $\sigma$ on $E^\infty$, the range $\beta>\beta_c$ in Theorem~\ref{th1} is the same as the range $\beta>\ln \rho(A)$ in \cite[Theorem~3.1]{aHLRS}. When we view $\TT C^*(E)$ as a $C^*$-subalgebra of $\TT(X(E^\infty))$, restricting KMS states of $(\TT(X(E^\infty)),\alpha)$ gives KMS states of $(\TT C^*(E),\alpha)$ with the same inverse temperature. Since we know from \cite[Theorem~3.1]{aHLRS} exactly what the KMS states of $(\TT C^*(E),\alpha)$ are, it is natural to ask which ones arise as the restrictions of states of $(\TT(X(E^\infty)),\alpha)$.

We chose notation in \S\ref{sec:Toeplitz} to emphasise the parallels with \cite[\S3]{aHLRS}, and hence we have a clash when we try to use both descriptions at the same time. So we write $\delta$ for the measure $\varepsilon$ in Theorem~\ref{th1}, and keep $\varepsilon$ for the vectors in $[1,\infty)^{E^0}$ appearing in \cite[Theorem~3.1]{aHLRS}. Otherwise we keep the notation of Theorem~\ref{th1}. 

\begin{prop}\label{restrict}
Suppose that $E$ is a finite directed graph with at least one cycle, and $A$ is the vertex matrix of $E$. Suppose that $\beta>\ln\rho(A)$, and that $\delta$ is a regular Borel measure on $E^\infty$ satisfying $\int f_\beta\,d\delta=1$. Define $\varepsilon=(\varepsilon_v)\in [0,\infty)^{E^0}$ by $\varepsilon_v=\delta(Z(v))$. Take $y=(y_v)\in [1,\infty)^{E^0}$ as in \cite[Theorem~3.1]{aHLRS}. Then $y\cdot \varepsilon=1$, and the restriction of the KMS$_\beta$ state $\phi_\delta$ of Theorem~\ref{th1} to $(\TT C^*(E),\alpha)$ is the state $\phi_\varepsilon$ of \cite[Theorem~3.1]{aHLRS}.
\end{prop}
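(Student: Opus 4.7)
The plan is to evaluate both states on the spanning set $\{S_\mu S_\nu^*\}$ of $\TT C^*(E)$ and check they agree.

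For part one, the key observation is that $f_\beta$ is constant on each cylinder $Z(v)$: for $z \in Z(v)$, a preimage $w \in \sigma^{-n}(z)$ has the form $w = \eta z$ with $\eta \in E^n v$, so $|\sigma^{-n}(z)| = |E^n v| = \sum_u A^n(u,v)$. Therefore
\[
f_\beta|_{Z(v)} = \sum_u\big((I - e^{-\beta}A)^{-1}\big)(u,v),
\]
which is precisely the entry $y_v$ appearing in \cite[Theorem~3.1]{aHLRS}. Since $E^\infty = \bigsqcup_v Z(v)$, integrating yields $\int f_\beta\,d\delta = \sum_v y_v \varepsilon_v = y \cdot \varepsilon$, and this is $1$ by hypothesis.

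For part two, I would first identify $S_\mu$ as an element of $\psi^{\otimes|\mu|}(X(F_{|\mu|}))$ via Lemma~\ref{idtps}. Writing $\mu = \mu_1\cdots \mu_k$, we have $S_\mu = \psi(\chi_{Z(\mu_1)})\cdots\psi(\chi_{Z(\mu_k)}) = \psi^{\otimes k}(\chi_{Z(\mu_1)}\otimes\cdots\otimes\chi_{Z(\mu_k)})$, and under $\rho_k$ this tensor corresponds to $z \mapsto \prod_i \chi_{Z(\mu_i)}(\sigma^{i-1}(z)) = \chi_{Z(\mu)}(z)$. Applying \eqref{equ2} gives
\[
\phi_\delta(S_\mu S_\nu^*) = \delta_{|\mu|,|\nu|}\,e^{-\beta|\mu|}\int\langle\chi_{Z(\nu)},\chi_{Z(\mu)}\rangle\,d\mu_{\mathrm{pr}},
\]
where I temporarily write $\mu_{\mathrm{pr}} := \sum_n e^{-\beta n}R^n\delta$ to avoid overloading notation. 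A direct computation of the inner product in $X(F_k)$ shows it equals $\delta_{\mu,\nu}\chi_{Z(s(\mu))}$: for $\mu \neq \nu$ of the same length, $Z(\mu)$ and $Z(\nu)$ are disjoint; and for $\mu = \nu$ the unique preimage of $z$ under $\sigma^k$ lying in $Z(\mu)$ is $w = \mu z$, which exists iff $z \in Z(s(\mu))$. Setting $m_v := \mu_{\mathrm{pr}}(Z(v))$ this yields $\phi_\delta(S_\mu S_\nu^*) = \delta_{\mu,\nu}\,e^{-\beta|\mu|}\, m_{s(\mu)}$.

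Finally I would match $m_v$ with the formula for $\phi_\varepsilon$ in \cite[Theorem~3.1]{aHLRS}. The preimages $w \in \sigma^{-n}(z)$ with $r(w) = v$ are in bijection with paths $\eta \in E^n$ from $v$ to $r(z)$, so $\sum_{\sigma^n(w)=z}\chi_{Z(v)}(w) = A^n(v,r(z))$. Hence $R^n\delta(Z(v)) = \sum_u A^n(v,u)\varepsilon_u = (A^n\varepsilon)_v$ and
\[
m_v = \sum_{n=0}^\infty e^{-\beta n}(A^n\varepsilon)_v = \big((I-e^{-\beta}A)^{-1}\varepsilon\big)_v,
\]
which is exactly the formula for $\phi_\varepsilon(p_v)$ in \cite[Theorem~3.1]{aHLRS}. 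Since the elements $S_\mu S_\nu^*$ span $\TT C^*(E)$, this shows $\phi_\delta|_{\TT C^*(E)} = \phi_\varepsilon$. The main hazard is purely notational: one must match the conventions of \cite{aHLRS} --- in particular whether their parametrization involves $A$ or $A^t$, and the exact form of their $y_v$ and $m_v$ --- but once the identification $S_\mu = \psi^{\otimes|\mu|}(\chi_{Z(\mu)})$ is in hand, all remaining work reduces to manipulations with cylinder sets.
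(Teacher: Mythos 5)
Your proposal is correct and follows essentially the same route as the paper's own proof: identify $f_\beta|_{Z(v)}$ with $y_v$ to get $y\cdot\varepsilon=1$, identify $S_\mu$ with $\psi^{\otimes|\mu|}(\chi_{Z(\mu)})$ via Lemma~\ref{idtps}, compute $\langle\chi_{Z(\nu)},\chi_{Z(\mu)}\rangle=\delta_{\mu,\nu}\chi_{Z(s(\mu))}$, and show $\big(\sum_n e^{-\beta n}R^n\delta\big)(Z(v))=\big((1-e^{-\beta}A)^{-1}\varepsilon\big)_v=m_v$. The only difference is that you spell out the $\rho_k$-identification of the elementary tensor with $\chi_{Z(\mu)}$, which the paper leaves implicit.
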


\begin{proof}
We begin by computing the function $f_\beta\in C(E^\infty)$. For $z\in E^\infty$, we have
\begin{align*}
f_\beta(z)&=\sum_{n=0}^\infty e^{-\beta n}|\sigma^{-n}(z)|
=\sum_{n=0}^\infty e^{-\beta n}|E^nr(z)|
&=\sum_{n=0}^\infty e^{-\beta n}\Big(\sum_{v\in E^0}|E^nv|\chi_{Z(v)}(z)\Big).
\end{align*}
Since $y_v=\sum_{\mu\in E^*v}e^{-\beta |\mu|}$, an application of the monotone convergence theorem shows that
\begin{equation}\label{equateconds}
1=\int f_\beta\,d\delta=\sum_{n=0}^\infty e^{-\beta n}\sum_{v\in E^0}|E^nv|\delta(Z(v))=\sum_{v\in E^0}y_v\varepsilon_v=y\cdot \varepsilon.
\end{equation}

To see that $\phi_\delta$ restricts to $\phi_\varepsilon$, it suffices to compute them both on elements $S_\lambda S_\nu^*$. Since $S_\lambda=\psi^{\otimes|\lambda|}(\chi_{Z(\lambda)})$ belongs to $X(E^\infty)^{\otimes |\lambda|}$, equations \eqref{equ2} and \cite[(3.1)]{aHLRS} imply that $\phi_\delta(S_\lambda S_\nu^*)=0=\phi_\varepsilon(S_\lambda S_\nu^*)$ when $|\lambda|\not=|\nu|$. So we suppose $|\lambda|=|\nu|=n$, say. Then \eqref{equ2} implies that
\[
\phi_\delta(S_\lambda S_\nu^*)=e^{-\beta n}\int \big\langle\chi_{Z(\nu)},\chi_{Z(\lambda)}\big\rangle\,d\mu,
\]
where $\mu=\sum_{k=0}^\infty e^{-\beta k}R^k\delta$. Viewing $X(E^\infty)^{\otimes n}$ as $X(F_n)$, as in Lemma~\ref{idtps}, we can compute
\[
\langle\chi_{Z(\nu)},\chi_{Z(\lambda)}\rangle(z)=\sum_{\sigma^n(w)=z}\overline{\chi_{Z(\nu)}(w)}\chi_{Z(\lambda)}(w)=\delta_{\lambda,\nu}\sum_{\sigma^n(w)=z}\chi_{Z(\lambda)}(w),
\]
and deduce that $\langle\chi_{Z(\nu)},\chi_{Z(\lambda)}\rangle=\delta_{\lambda,\nu}\chi_{Z(s(\lambda))}$. Thus 
\begin{equation}\label{formonbigalg}
\phi_\delta(S_\lambda S_\nu^*)=\delta_{\lambda,\nu}e^{-\beta n}\mu(Z(s(\lambda))).
\end{equation}

So we want to compute $\mu(Z(v))$ for $v\in E^0$. For each $k$, we have
\[
(R^k\delta)(Z(v))=\int \chi_{Z(v)}\,d(R^k\delta)(z)=\int \sum_{\sigma^k(w)=z}\chi_{Z(v)}(w)\,d\delta(z).
\]
We have
\[
\sum_{\sigma^k(w)=z}\chi_{Z(v)}(w)=|vE^kr(z)|=A^k(v,r(z))=\sum_{u\in E^0}A^k(v,u)\chi_{Z(u)}(z).
\]
Thus 
\[
(R^k\delta)(Z(v))=\int \sum_{u\in E^0}A^k(v,u)\chi_{Z(u)}\,d\delta= \sum_{u\in E^0}A^k(v,u)\delta(Z(u)),
\]
and
\begin{align*}
\mu(Z(v))&=\sum_{k=0}^\infty e^{-\beta k}\sum_{u\in E^0} A^k(v,u)\delta(Z(v))\\
&=\sum_{k=0}^\infty e^{-\beta k}(A^n\varepsilon)_v=\big((1-e^{-\beta}A)^{-1}\varepsilon\big)_v.
\end{align*}
Now we go back to \eqref{formonbigalg}, and write down
\begin{equation}\label{formphidelta}
\phi_\delta(S_\lambda S_\nu^*)=\delta_{\lambda,\nu}e^{-\beta n}\big((1-e^{-\beta}A)^{-1}\varepsilon\big)_{s(\lambda)},
\end{equation}
which in the notation of \cite[Theorem~3.1(b)]{aHLRS} is $\delta_{\lambda,\nu}e^{-\beta n}m_{s(\lambda)}$. It follows from this and \cite[(3.1)]{aHLRS} that $\phi_\delta(S_\lambda S_\nu^*)=\phi_\varepsilon(S_\lambda S_\nu^*)$, as required.
\end{proof}

Proposition~\ref{restrict} implies that the system $(\TT(X(E^\infty)),\alpha)$ has many more KMS states than $(\TT C^*(E),\alpha)$:

\begin{cor}\label{restrictionssame}
Suppose that $\beta>\max(\beta_c,\ln\rho(A))$, and that $\delta_1$, $\delta_2$ are regular Borel measures on $E^\infty$ satisfying $\int f_\beta\,d\delta_i=1$. Then $\phi_{\delta_1}|_{\TT C^*(E)}=\phi_{\delta_2}|_{\TT C^*(E)}$ if and only if $\delta_1(Z(v))=\delta_2(Z(v))$ for all $v\in E^0$.
\end{cor}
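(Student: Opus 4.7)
The plan is to reduce both directions to Proposition~\ref{restrict} combined with the aHLRS parametrization of KMS states of $\TT C^*(E)$. For each $i \in \{1,2\}$, I would set $\varepsilon^{(i)} = (\delta_i(Z(v)))_{v\in E^0}$. Proposition~\ref{restrict} then identifies $\phi_{\delta_i}|_{\TT C^*(E)}$ with the state $\phi_{\varepsilon^{(i)}}$ of \cite[Theorem~3.1]{aHLRS}, and along the way verifies $y\cdot\varepsilon^{(i)}=1$, so $\varepsilon^{(i)}$ is a legitimate index for a KMS state in the sense of aHLRS.

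For the forward direction, the hypothesis $\delta_1(Z(v))=\delta_2(Z(v))$ for all $v\in E^0$ says exactly that $\varepsilon^{(1)}=\varepsilon^{(2)}$ as vectors in $[0,\infty)^{E^0}$. Since $\phi_{\varepsilon^{(i)}}$ is defined by the formula \cite[(3.1)]{aHLRS} purely in terms of this vector, the two restricted states then coincide.

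For the converse, suppose $\phi_{\delta_1}|_{\TT C^*(E)} = \phi_{\delta_2}|_{\TT C^*(E)}$. I would evaluate both sides at the vertex projections $P_v=\pi(\chi_{Z(v)})\in \TT C^*(E)$. Applying formula~\eqref{formphidelta} from the proof of Proposition~\ref{restrict} with $\lambda=\nu=v$ (viewing the vertex $v$ as a path of length $0$, so $n=0$ and $S_v=P_v$) gives
\[
\phi_{\delta_i}(P_v) = \big((1-e^{-\beta}A)^{-1}\varepsilon^{(i)}\big)_v \qquad \text{for all } v\in E^0.
\]
Since $\beta>\ln\rho(A)$, we have $\rho(e^{-\beta}A)<1$, so $1-e^{-\beta}A$ is invertible on $\C^{E^0}$. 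Equality of $(1-e^{-\beta}A)^{-1}\varepsilon^{(1)}$ and $(1-e^{-\beta}A)^{-1}\varepsilon^{(2)}$ coordinate-by-coordinate therefore forces $\varepsilon^{(1)}=\varepsilon^{(2)}$, i.e.\ $\delta_1(Z(v))=\delta_2(Z(v))$ for all $v$.

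There is no real obstacle: once Proposition~\ref{restrict} is in hand, the corollary is essentially a matter of unpacking its formula on the vertex projections and inverting $1-e^{-\beta}A$, whose invertibility is guaranteed by the hypothesis $\beta>\max(\beta_c,\ln\rho(A))\geq \ln\rho(A)$.
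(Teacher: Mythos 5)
Your proof is correct and follows the same overall route as the paper: both directions reduce to Proposition~\ref{restrict} and the comparison of the vectors $\varepsilon^{(i)}=(\delta_i(Z(v)))_v$. The only difference is in the converse: the paper simply invokes the injectivity of $\varepsilon\mapsto\phi_\varepsilon$ from \cite[Theorem~3.1(c)]{aHLRS}, whereas you re-derive that injectivity on the spot by evaluating \eqref{formphidelta} at the vertex projections $P_v$ and inverting $1-e^{-\beta}A$ (legitimate since $\beta>\ln\rho(A)$ gives $\rho(e^{-\beta}A)<1$); this makes your argument slightly more self-contained at the cost of repeating a computation already packaged in the cited theorem.
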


\begin{proof}
Suppose that $\delta_1$ and $\delta_2$ are as described, and $\phi_{\delta_1}|_{\TT C^*(E)}=\phi_{\delta_2}|_{\TT C^*(E)}$. Then Proposition~\ref{restrict} implies that corresponding $\varepsilon_i$ have $\phi_{\epsilon_1}=\phi_{\epsilon_2}$, and the injectivity of the map $\varepsilon\mapsto\phi_\varepsilon$ from \cite[Theorem~3.1(c)]{aHLRS} says that ${\epsilon_1}={\epsilon_2}$. But this says precisely that $\delta_1$ and $\delta_2$ agree on each $Z(v)$.

On the other hand, if $\delta_1(Z(v))=\delta_2(Z(v))$ for all $v\in E^0$, then the corresponding $\varepsilon_i$ are equal, and the formula \eqref{formphidelta} implies that $\phi_{\delta_1}$ and $\phi_{\delta_2}$ agree on ${\TT C^*(E)}$.
\end{proof}

\begin{cor}\label{allrestrics}
Suppose that $\beta>\max(\beta_c,\ln\rho(A))$. Then every KMS$_\beta$ state of $(\TT C^*(E),\alpha)$ is the restriction of a KMS$_\beta$ state of $(\TT(X(E^\infty)),\alpha)$.
\end{cor}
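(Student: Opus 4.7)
The plan is to assemble the corollary from three ingredients already in the paper: the parametrisation \cite[Theorem~3.1]{aHLRS} of KMS$_\beta$ states of $(\TT C^*(E),\alpha)$ by vectors $\varepsilon\in[0,\infty)^{E^0}$ satisfying $y\cdot\varepsilon=1$; our Theorem~\ref{th1}, which parametrises KMS$_\beta$ states of $(\TT(X(E^\infty)),\alpha)$ by finite regular Borel measures $\delta$ on $E^\infty$ with $\int f_\beta\,d\delta=1$; and Proposition~\ref{restrict}, which tells us that the restriction map, at the level of parameters, is given by $\delta\mapsto (\delta(Z(v)))_{v\in E^0}$. So the task reduces to showing this map from measures to vectors is surjective onto the set of allowable $\varepsilon$.

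To do this, I would start with an arbitrary KMS$_\beta$ state of $(\TT C^*(E),\alpha)$, which by \cite[Theorem~3.1]{aHLRS} has the form $\phi_\varepsilon$ for some $\varepsilon\in[0,\infty)^{E^0}$ with $y\cdot\varepsilon=1$, and then exhibit a measure $\delta$ on $E^\infty$ with $\delta(Z(v))=\varepsilon_v$ for every $v$. Since $\sigma$ must be surjective for Theorem~\ref{th1} to apply, $E$ has no sinks, so every cylinder $Z(v)$ is nonempty; I would simply choose any point $z_v\in Z(v)$ for each $v\in E^0$ and set
\[
\delta=\sum_{v\in E^0}\varepsilon_v\,\delta_{z_v}.
\]
This is a finite positive Borel measure, and because the clopen sets $\{Z(v):v\in E^0\}$ partition $E^\infty$, we get $\delta(Z(v))=\varepsilon_v$ for every $v$.

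Next I would verify the normalisation condition $\int f_\beta\,d\delta=1$. The computation of $f_\beta$ carried out at the start of the proof of Proposition~\ref{restrict} gives
\[
\int f_\beta\,d\delta=\sum_{v\in E^0}y_v\,\delta(Z(v))=\sum_{v\in E^0}y_v\varepsilon_v=y\cdot\varepsilon=1,
\]
where the last equality uses the defining property of $\varepsilon$. By Theorem~\ref{th1}\eqref{3a}, this $\delta$ produces a KMS$_\beta$ state $\phi_\delta$ on $(\TT(X(E^\infty)),\alpha)$. Finally, Proposition~\ref{restrict} applied to this $\delta$ yields $\phi_\delta|_{\TT C^*(E)}=\phi_{\varepsilon'}$, where $\varepsilon'_v=\delta(Z(v))=\varepsilon_v$; that is, $\phi_\delta|_{\TT C^*(E)}=\phi_\varepsilon$, which is what we wanted.

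There is no genuine obstacle here: the work has already been done in Theorem~\ref{th1} and Proposition~\ref{restrict}, and the only new point is the observation that one can freely prescribe the values $\delta(Z(v))$ by placing point masses in each cylinder. The hypothesis $\beta>\max(\beta_c,\ln\rho(A))$ ensures both Theorem~\ref{th1} and \cite[Theorem~3.1]{aHLRS} apply (and by Proposition~\ref{compbetac} the two lower bounds coincide for shifts).
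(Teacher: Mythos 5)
Your argument is correct, and its overall skeleton is exactly the paper's: reduce, via \cite[Theorem~3.1]{aHLRS} and Proposition~\ref{restrict}, to producing a finite regular Borel measure $\delta$ on $E^\infty$ with $\delta(Z(v))=\varepsilon_v$ for all $v$, and then observe that the normalisation $\int f_\beta\,d\delta=y\cdot\varepsilon=1$ comes for free from the computation \eqref{equateconds}. The one step where you diverge is the construction of $\delta$: the paper realises $E^\infty$ as the inverse limit $\varprojlim(E^n,r_n)$ and builds $\delta$ by recursively distributing weights over cylinders $Z(\nu)$ (invoking \cite[Lemma~6.1]{BLPRR}), whereas you simply place a point mass $\varepsilon_v\,\delta_{z_v}$ in each nonempty cylinder $Z(v)$. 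Your construction is strictly simpler and entirely legitimate --- point masses are regular Borel measures, the paper itself uses them in the proof of Theorem~\ref{existbetac}, and the nonemptiness of each $Z(v)$ follows from the standing no-sinks hypothesis needed for $\sigma$ to be surjective. What the paper's heavier construction buys is flexibility: it prescribes $\delta$ on all cylinders $Z(\nu)$, not just the $Z(v)$, which makes visible the large fibres of the restriction map described in Corollary~\ref{restrictionssame}; for the surjectivity statement of this corollary alone, your point-mass measure suffices.
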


\begin{proof} Suppose that $\phi$ is a KMS$_\beta$ state on $(\TT C^*(E),\alpha)$. Then \cite[Theorem~3.1]{aHLRS} implies that there is a vector $\varepsilon\in [1,\infty)^{E^0}$ such that $y\cdot \varepsilon=1$ and $\phi=\phi_\varepsilon$. If $\delta$ is a measure on $E^\infty$ such that $\delta(Z(v))=\varepsilon_v$ for all $v\in E^0$ and $\int f_\beta\,d\delta=1$, then Proposition~\ref{restrict} implies that $\phi_\delta|_{\TT C^*(E)}=\phi_\varepsilon$. So it suffices to show that there is such a measure $\delta$.

We can construct measures on $E^\infty$ by viewing it as an inverse limit $\varprojlim(E^n,r_n)$, where $r_n:E^{n+1}\to E^n$ takes $\nu=\nu_1\nu_2\cdots\nu_n\nu_{n+1}$ to $\nu_1\nu_2\cdots\nu_n$. Then any family of measures $\delta_n$ on $E^n$ such that $\delta_{n+1}(Z(\nu)\cap E_{n+1})=\delta_n(Z(\nu))$ for $|\nu|=n$ gives a measure $\delta$ on $E^\infty$ such that $\delta(Z(\nu))=\delta_n(Z(\nu))$ for $|\nu|=n$ (see, for example, Lemma~6.1 of \cite{BLPRR}). We can construct such a sequence by taking $\delta_0=\varepsilon$, inductively choosing weights $w_e$ such that $\sum_{r(e)=v}w_e=\varepsilon_v$, recursively choosing $\{w_{\nu e}\in [0,\infty):\nu e\in E^{n+1}\}$ such that $\sum_{r(e)=s(\nu)}w_{\nu e}=w_\nu$, and setting $\delta_{n+1}(\nu e)=w_{\nu e}$. Now the calculation \eqref{equateconds} shows that $\int f_\beta\,d\delta=y\cdot\varepsilon=1$, and hence $\delta$ has the required properties.
\end{proof}

\section{KMS states below the critical inverse temperature}\label{sec:below}

In Remark~\ref{compwithT}, we showed that our crictical inverse temperature $\beta_c$ is the same as the one found by Thomsen \cite{Th1}. He only considers states of the Cuntz-Pimsner system $(\OO(X(E)),\bar\alpha)$, and we agree that this system has no KMS$_\beta$ states with $\beta>\beta_c$. However, he leaves open the possibility that there are KMS$_\beta$ states with $\beta<\beta_c$. Indeed, he considers also the number
\begin{equation}\label{deflb}
\beta_l:=\limsup_{N\to \infty}\Big(N^{-1}\ln\Big(\min_{z\in Z}|h^{-N}(z)|\Big)\Big),
\end{equation}
and then \cite[Theorem~6.8]{Th1} implies that the KMS states of $(\OO(X(E^\infty)),\bar\alpha)$ all have inverse temperatures in the interval $[\beta_l,\beta_c]$. Since $(\OO(X(E^\infty)),\bar\alpha)=(C^*(E),\alpha)$, we can use examples from \cite{aHLRS2} to see that Thomsen's bounds are best possible.

More precisely, consider the dumbbell graphs
\[
\begin{tikzpicture}
    \node[inner sep=1pt] (v) at (0,0) {$v$};
    \node[inner sep=1pt] (w) at (2,0) {$w$};
    \draw[-latex] (w)--(v);
    \foreach \x in {0,4} {
     \draw[-latex] (v) .. controls +(-1.\x,-1.\x) and +(-1.\x,1.\x) ..(v);}
    \foreach \x in {0,4,8} {
     \draw[-latex] (w) .. controls +(1.\x,-1.\x) and +(1.\x,1.\x) .. (w);}
\end{tikzpicture}
\]
with $m$ loops at vertex $v$ and $n$ loops at vertex $w$. (So in the above picture, we have $m=2$ and $n=3$. This graph was discussed in \cite[Example~6.2]{aHLRS2}, and the one with $m=3$ and $n=2$ in \cite[Example~6.1]{aHLRS2}.) The vertex matrix $A$ of such a graph $E$ is upper triangular and has spectrum $\{m,n\}$. 
For $m\geq n$, the system $(C^*(E),\alpha)$ has a single KMS$_{\ln m}$ state, and this is the only KMS state. 

Now we suppose that $m<n$. Then $\rho(A)=n$, and $(C^*(E),\alpha)$ has two KMS states. The first is denoted by $\psi_{\{w\}}$ in \cite{aHLRS2}, and has inverse temperature $\ln n$. The second factors through the quotient map of $C^*(E)$ onto the $C^*$-algebra of the graph with vertex $v$ and $m$ loops, which is a Cuntz-algebra $\OO_m$. It has inverse temperature $\ln m$. For this graph, we have $\beta_c=\ln\rho(A)=\ln n$. To compute $\beta_l$, we let $z\in E^\infty$. Then
\[
|\sigma^{-N}(z)|=|E^Nr(z)|=
\begin{cases}
m^N&\text{if $r(z)=v$}\\
n^N+\sum_{j=0}^{N-1}n^jm^{N-1-j}&\text{if $r(z)=w$.}
\end{cases}
\]
Since $m<n$, the minimum is attained when $r(z)=v$, and $\min_{z\in E^\infty}|\sigma^{-N}(z)|=m^n$, giving $\beta_l=\ln m$. Thus for this graph, the possible inverse temperatures are precisely the end-points of Thomsen's interval.

\begin{remark}
By adding appropriate strongly connected components between $w$ and $v$ in this last example, we can construct examples for which there are KMS states with  inverse temperatures between $\beta_l$ and $\beta_c$. However, there are number-theoretic constraints on the possible inverse temperatures (see \cite[\S7]{aHLRS2}).
\end{remark}

\end{document}